\documentclass[11pt, a4paper]{article}
\usepackage{amsmath,bm,amssymb,latexsym,color,enumerate,extarrows}
\usepackage{authblk}
\usepackage[mathscr]{eucal}
\usepackage[colorlinks,
            linkcolor=blue,
            anchorcolor=green,
            citecolor=magenta
           ]{hyperref}
\newtheorem{thm}{Theorem}[section]

\newtheorem{lem}[thm]{Lemma}

\newtheorem{con}{Construction}

\newtheorem{fact}{Fact}
\newtheorem{rema}[thm]{Remark}

\newtheorem{example}{Example}[section]
\newtheorem{defin}{Definition}[section]

\newcommand{\proof}{{\it Proof.\quad}}
\newcommand{\qed}{\hfill\Box\medskip}

\allowdisplaybreaks
\usepackage{CJK}

\begin{document}

\renewcommand{\baselinestretch}{1.3}

\title{\bf Extremal cross $t$-intersecting families under $t$-covering number constraints for vector spaces}

\author[1]{Yu Zhu\thanks{E-mail: \texttt{zhu\_y@mail.bnu.edu.cn}}}
\author[1]{Benjian Lv\thanks{Corresponding author. E-mail: \texttt{bjlv@bnu.edu.cn}}}
\author[1]{Kaishun Wang\thanks{E-mail: \texttt{wangks@bnu.edu.cn}}}

\affil[1]{\small Laboratory of Mathematics and Complex Systems (Ministry of Education), School of Mathematical Sciences, Beijing Normal University, Beijing 100875, China}

\date{}
\maketitle
\begin{abstract}
Let $V$ be an $n$-dimensional vector space over the finite field $\mathbb{F}_q$, and ${V\brack k}$ denote the family of all $k$-dimensional subspaces of $V$. The families $\mathcal{F}\subseteq {V\brack k}$ and $\mathcal{G}\subseteq {V\brack \ell}$ are said to be cross $t$-intersecting if $\dim(F\cap G)\geq t$ for all $F\in\mathcal{F}$ and $G\in \mathcal{G}$. In this paper, we determine the extremal structures when $|\mathcal{F}||\mathcal{G}|$ attains the maximum value under the conditions $\dim\left(\cap_{F\in \mathcal{F}}F\right)<t$ and $\dim\left(\cap_{G\in \mathcal{G}}G\right)<t$.

\medskip

\noindent {\em AMS classification:}  05D05

\noindent {\em Key words:} cross $t$-intersecting families; $t$-covering number; vector space

\end{abstract}

\section{Introduction}

Let $n$ and $k$ be positive integers, and $V$ an $n$-dimensional vector space over the finite field $\mathbb{F}_q$, where $q$ is a prime power.
Given subspaces $S$ and $M$ with $S \subseteq M \subseteq V$, denote by $[S, M]_k$ the family of all $k$-dimensional subspaces of $M$ containing $S$. In particular, write $[S\rangle_k=\big[S, V\big]_k$ and ${M\brack k}=[0, M]_k$. In the sequel, we will abbreviate `$k$-dimensional subspace'
as `$k$-subspace'. Recall that the cardinality of ${V\brack k}$ is the Gaussian binomial coefficient
$${n\brack k} = \prod_{i=0}^{k-1} \frac{q^{n-i}-1} {q^{k-i}-1}.$$
In addition, we set ${n\brack 0}=1$ and ${n\brack m}=0$ if $m$ is a negative integer, and write $\theta_n={n\brack 1}$.

Let $\mathcal{F}$ be a family of some subspaces of $V$. A subspace $T\subseteq V$ is called a $t$-\emph{cover} of $\mathcal{F}$ if $\dim(T\cap F)\geq t$ for all $F\in\mathcal{F}$, and the $t$-\emph{covering number} $\tau_t(\mathcal{F})$ of $\mathcal{F}$ is the minimum dimension of a $t$-cover of $\mathcal{F}$.
It is immediate that $\tau_t(\mathcal{F})\geq t$, and equality holds if and only if $\dim\left(\cap_{F\in\mathcal{F}}F\right)\geq t$.

For positive integers $k$, $\ell$ and $t$, two families
$\mathcal{F}\subseteq {V\brack k}$ and $\mathcal{G}\subseteq {V\brack \ell}$ are called \emph{cross} $t$-\emph{intersecting} if
$\dim(F\cap G)\geq t$ for every $F\in\mathcal{F}$ and $G\in \mathcal{G}$. They are said to be \emph{maximal} if $\mathcal{F}=\mathcal{F}'$ and $\mathcal{G}=\mathcal{G}'$ for every pair of cross $t$-intersecting families $\mathcal{F}'\subseteq {V\brack k}$ and $\mathcal{G}'\subseteq {V\brack \ell}$ satisfying $\mathcal{F}\subseteq\mathcal{F}'$ and $\mathcal{G}\subseteq\mathcal{G}'$.
Clearly,
$t\leq \tau_t(\mathcal{F})\leq \ell$ and $t\leq \tau_t(\mathcal{G})\leq k$.
In particular, if $\mathcal{F}=\mathcal{G}$, then $\mathcal{F}$ is called \emph{$t$-intersecting}.
We may mention that, the definition of cross $t$-intersecting for vector spaces is a natural extension of that for finite sets. We refer to \cite{Borg-2016, Cao-Lu-Lv-Wang-2024, Frankl-Wang-2022, He-Li-Wu-Zhang-2026, Pyber-1986, Tanaka-Tokushige-2025, Tokushige-2013, Zhang-Wu-2025, Zhu-Lv-Wang-2026} for extremal cross $t$-intersecting families under $t$-covering number constraints for finite sets.

The problem of maximizing the product $|\mathcal{F}||\mathcal{G}|$ is a natural product analogue of the Erd\H{o}s-Ko-Rado problem.
Tokushige \cite{Tokushige-2013} used the eigenvalue method to describe the structure of $\mathcal{F}$ and $\mathcal{G}$ maximizing $|\mathcal{F}||\mathcal{G}|$ for $k=\ell$. Suda and Tanaka \cite{Suda-Tanaka-2014} used semidefinite programming to settle the case of arbitrary $k$ and $\ell$ when $t=1$.
Recently, Wen and Lv \cite{Wen-Lv-2026} used the $t$-cover to resolve the case of arbitrary $k$ and $\ell$.
These results show that, for large $n$, $|\mathcal{F}||\mathcal{G}|\leq {n-t\brack k-t}{n-t\brack \ell-t}$ and equality implies $\tau_t(\mathcal{F}\cup \mathcal{G})=t$.
Cao et al. \cite{Cao-Lu-Lv-Wang-2023} determined all extremal configurations of cross $t$-intersecting families of arbitrary $k$ and $\ell$ satisfying $\tau_t\left(\mathcal{F}\cup\mathcal{G}\right)\geq t+1$. It follows that $\tau_t(\mathcal{F})=t$ or $\tau_t(\mathcal{G})=t$ for every such extremal configuration $(\mathcal{F},\mathcal{G})$.
This motivates the study of cross $t$-intersecting families satisfying $\tau_t(\mathcal{F})\geq t+1$ and $\tau_t(\mathcal{G})\geq t+1$. Let $\mathcal{F}\subseteq {V\brack k}$ and $\mathcal{G}\subseteq {V\brack \ell}$ be such families with $|\mathcal{F}||\mathcal{G}|$ attaining the maximum value. Recently, Yao et al. \cite{Yao-Liu-Wang-2025} determined the structures of $\mathcal{F}$ and $\mathcal{G}$ for $k=\ell$,
In this paper, we focus on the structures of $\mathcal{F}$ and $\mathcal{G}$ for arbitrary $k$ and $\ell$.
We first introduce the following constructions.

\begin{con}\label{C1}
$\mathcal{A}(k,t;Z)=\left\{F\in {V\brack k} : \dim(F\cap Z)\geq t+1\right\}$, where $Z\in {V\brack t+2}$.
\end{con}

\begin{con}\label{C4}
Let $X\in {V\brack k+1}$, $Y\in {V\brack \ell+1}$ and $T\in {V\brack t}$ such that $\dim(X\cap Y)\geq t+2$ and $T\subseteq X\cap Y$. Define
\small{$$\mathcal{H}(k,t;T,X,Y)=\left\{F\in {V\brack k} : T\subseteq F,~\dim(F\cap Y)\geq t+1\right\}\cup \left\{F\in {X\brack k} : \dim(F\cap T)=t-1\right\}.$$}
\end{con}

\begin{con}\label{C3}
Let $M\in {V\brack \ell+1}$, and $L\in{M\brack t+1}$. Define
\begin{align*}
\mathcal{C}_1(\ell,t;M,L)&=\left\{G\in {V\brack \ell} : L\subseteq G\right\}\cup\left\{G\in {M\brack \ell} : \dim(G\cap L)=t\right\},\\
\mathcal{C}_2(k,t;M,L)&=\left\{F\in {V\brack k} : L\subseteq F\right\}\cup\left\{F\in {V\brack k} : \dim(F\cap L)=t,~\dim(F\cap M)\geq t+1\right\}.
\end{align*}
\end{con}

Obviously, each of the pairs $\left(\mathcal{A}(k,t;Z), ~\mathcal{A}(\ell,t;Z)\right)$, $\left(\mathcal{H}(k,t;T,X,Y), ~\mathcal{H}(\ell,t;T,Y,X)\right)$, and $\left(\mathcal{C}_2(k,t;M,L),~\mathcal{C}_1(\ell,t;M,L)\right)$ is cross $t$-intersecting if $\min\{k,\ell\}\geq t+2$. Our main result characterizes the extremal structures of cross $t$-intersecting families, whose $t$-covering numbers are at least $t+1$.

\begin{thm}\label{main-1}
Let $n$, $k_1$, $k_2$ and $t$ be positive integers with $k_1\geq k_2\geq t+2$ and $n\geq 3k_1+k_2-t+5$. Suppose that $\mathcal{F}_1\subseteq {V\brack k_1}$ and $\mathcal{F}_2\subseteq {V\brack k_2}$ are cross $t$-intersecting families with $\tau_t(\mathcal{F}_1)\geq t+1$ and $\tau_t(\mathcal{F}_2)\geq t+1$. If $|\mathcal{F}_1||\mathcal{F}_2|$ attains the maximum value, then $(\mathcal{F}_1,~\mathcal{F}_2)$ is one of the following pairs:
\begin{enumerate}[{\rm(i)}]
  \item $\left(\mathcal{A}(k_1,t;Z), ~\mathcal{A}(k_2,t;Z)\right)$, where $Z\in {V\brack t+2}$,
  \item $\left(\mathcal{H}(k_1,t;T,X,Y), ~\mathcal{H}(k_2,t;T,Y,X)\right)$, where $X\in {V\brack k_1+1}$, $Y\in {V\brack k_2+1}$ and $T\in {V\brack t}$ such that $\dim(X\cap Y)\geq t+2$ and $T\subseteq X\cap Y$,
  \item $\left(\mathcal{C}_1(k_1,t;M,L), ~\mathcal{C}_2(k_2,t;M,L)\right)$, where $M\in {V\brack k_1+1}$, and $L\in{M\brack t+1}$.
\end{enumerate}
\end{thm}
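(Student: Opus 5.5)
We may assume throughout that $(\mathcal{F}_1,\mathcal{F}_2)$ is maximal, since enlarging a cross $t$-intersecting pair never decreases the product. We may also assume the covering numbers stay at least $t+1$: if an enlargement produced a common $t$-subspace contained in all members of one family, the original family already had it, since we only add sets. We then split according to $(\tau_1,\tau_2):=(\tau_t(\mathcal{F}_1),\tau_t(\mathcal{F}_2))$.

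The first step is a standard counting lemma for the vector-space setting. If $\mathcal{F}\subseteq{V\brack k}$ is $t$-covered by some $s$-subspace, then the number of members containing a fixed $u$-subspace $U$ with $\dim(U\cap \text{cover})$ small is at most ${s\brack t}$-type factors times ${n-u-\cdots\brack k-u-\cdots}$. Iterating the usual branching argument gives
$$|\mathcal{F}_i|\le {\tau_{3-i}\brack t}\,\theta_{k_i}^{\,\tau_{3-i}-t}\cdots{n-\tau_{3-i}\brack k_i-\tau_{3-i}}.$$
With $n\ge 3k_1+k_2-t+5$, this shows that if $\tau_1\ge t+2$ or $\tau_2\ge t+2$, the product is at most roughly $q^{-(n-\text{const})}$ times the product of case (i). That product is about ${t+2\brack 2}\theta_{t+2}$-ish, namely $\Theta\bigl({n-t-1\brack k_1-t-1}{n-t-1\brack k_2-t-1}\bigr)$. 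So we are reduced to the case $\tau_1=\tau_2=t+1$.

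In that case, let $\mathcal{T}_i$ be the family of $(t+1)$-dimensional $t$-covers of $\mathcal{F}_{3-i}$. By cross-intersection and maximality, every $(t+1)$-subspace in $\mathcal{T}_i$ gives $[T\rangle_{k_i}\subseteq\mathcal{F}_i$. Conversely, the bulk of $\mathcal{F}_i$ lies in $\bigcup_{T\in\mathcal{T}_i}[T\rangle_{k_i}$, and the remainder is bounded by lower-order terms. The key structural step is to classify the configurations $(\mathcal{T}_1,\mathcal{T}_2)$, using that every element of $\mathcal{T}_1$ must $t$-intersect every element of $\mathcal{T}_2$ in a strong sense: they must $t$-intersect each member of the other family, and $\tau\ge t+1$ forces the spans of the $\mathcal{T}_i$ to be large. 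The candidate classes are as follows. All $(t+1)$-subspaces of a $(t+2)$-space $Z$ on both sides gives (i). Both sides containing a common $T$ and lying in $X$, respectively $Y$, gives (ii). One side equal to $\{L\}$ and the other consisting of all $(t+1)$-subspaces of $M$ through $L$'s $t$-subspaces gives (iii). For each class, maximality then determines the families exactly as in Constructions \ref{C1}, \ref{C4}, \ref{C3}.

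The final step compares $|\mathcal{F}_1||\mathcal{F}_2|$ across these classes. In each surviving class the sizes are explicit Gaussian-binomial expressions, for example $|\mathcal{A}(k,t;Z)|=\theta_{t+2}{n-t-1\brack k-t-1}-\ldots$. Any configuration not of these shapes, such as $\mathcal{T}_i$ spanning a $(t+3)$-space with fewer elements, loses a factor we expect to show is at least $1-O(q^{-(n-3k_1-k_2+t)})$ smaller than the best. The three families (i)–(iii) must all be retained, because their products tie or are incomparable depending on $k_1,k_2,q$. Presumably the theorem lists them because each can be extremal for some parameters, or the comparison leaves them equal to leading order.

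I expect the main obstacle to be this fine comparison. The leading terms ${n-t-1\brack k_i-t-1}$ multiplied by the counts $|\mathcal{T}_i|$ are comparable across (i)–(iii) and their competitors. Second-order terms, namely the members of $\mathcal{F}_i$ not containing any $T\in\mathcal{T}_i$, such as the ${X\brack k}$ pieces, decide the winner. I would handle this by first maximizing $|\mathcal{T}_1|\cdot|\mathcal{T}_2|$-weighted leading terms, and then bounding the residual families by ${k_1+1\brack k_1}$-type quantities, using the hypothesis $n\ge 3k_1+k_2-t+5$ to absorb the error terms.
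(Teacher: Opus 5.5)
Your outer frame matches the paper. You pass to a maximal pair, dispose of $(\tau_t(\mathcal{F}_1),\tau_t(\mathcal{F}_2))\neq(t+1,t+1)$ with branching (kernel) bounds, and bound each family by $|\mathcal{T}|\,{n-t-1\brack k-t-1}$ plus an $O\bigl({n-t-2\brack k-t-2}\bigr)$ remainder, where $\mathcal{T}$ is the family of $(t+1)$-dimensional $t$-covers of the other family (Lemma~\ref{not-contain}). But the step you yourself call the key one is where essentially the whole proof lives, and you do not carry it out. You name the three cover configurations that produce (i)--(iii) and assert that everything else loses, with no exhaustive classification and no size bounds.

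Your guiding heuristic also points the wrong way. The condition $\tau_t\ge t+1$ does not force the spans of the cover families to be large; it is exactly what bounds them from above. Suppose all $(t+1)$-dimensional $t$-covers of $\mathcal{F}$ contain a common $T\in{V\brack t}$. Since $\tau_t(\mathcal{F})=t+1$, there is $F\in\mathcal{F}$ with $T\not\subseteq F$. For every such cover $A$, $\dim(F\cap A)\ge t$ then forces $\dim(F\cap T)=t-1$ and $A=(F\cap A)+T\subseteq F+T$. Hence $\dim M_f\le k+1$ and $|\mathcal{T}_f|\le\theta_{k-t+1}$. Without an argument of this kind, $\mathcal{T}_1=\mathcal{T}_2=[T\rangle_{t+1}$ is a cross $t$-intersecting pair with $\theta_{n-t}$ members each, so ``maximize $|\mathcal{T}_1||\mathcal{T}_2|$'' has no content.

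When one cover family is not $t$-intersecting, you similarly need the rigidity \eqref{bbabb} and the case split of Lemmas~\ref{T=1}--\ref{both-non-inter}. These give sharp bounds such as $q\theta_{t+1}\theta_{\ell-t}+1$, $\theta_2\theta_{\ell-t+1}$, $(q+1,\max\{\theta_{\ell-t+1}+\theta_{t+2}-\theta_2,\theta_{\ell-t+2}\})$, $(2,(q+1)^2)$ and $(2q+1,2q+1)$, together with the equality structures.

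You also misplace the difficulty. The theorem does not pick a winner among (i)--(iii). It only needs every other configuration to be beaten by \emph{some} one of the three, and the right competitor depends on parameters ($t=1$ versus $t\ge2$; $k_i\le 2t$ versus $k_i\ge 2t+1$). Most of these comparisons are at the level of leading coefficients: $|\mathcal{T}_1||\mathcal{T}_2|$ against $\theta_{t+2}^2$, $\theta_{k_1-t+1}\theta_{k_2-t+1}$ or $q\theta_{t+1}\theta_{k_1-t}+1$. They are not losses by factors $1-O(q^{-(n-\cdots)})$.

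A genuinely second-order argument is needed in exactly one place: the cover sizes equal those of (ii) but $\dim(M_f\cap M_g)=t+1$. There the leading terms tie with $h(k,\ell,t)h(\ell,k,t)$. Set $S=M_f\cap M_g$ and let $F_1\in\mathcal{F}$ be a member not containing $T$. One must show structurally that every member of $\mathcal{F}$ not containing $T$ lies in $\{F\in{M_f\brack k}:F\cap S=F_1\cap S\}$, which loses $q^{k-t}(q\theta_t-1)$. Generic error-term bookkeeping cannot detect this.

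Finally, in the equality case with a one-element cover family, $\mathcal{C}_1$ appears as the family whose own covers are not $t$-intersecting, and $\dim M$ is that family's dimension plus one. To land on (iii) as stated, you must rule out the orientation with $\dim M=k_2+1$ when $k_1>k_2$; the paper does this via $c_1(k_2,t)c_2(k_1,k_2,t)<c_1(k_1,t)c_2(k_2,k_1,t)$. Your sketch does not address this. As written, the proposal outlines the paper's strategy with its core missing.
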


The rest of this paper is organized as follows.
In Section \ref{2}, we study cross
$t$-intersecting families $\mathcal{F}$ and $\mathcal{G}$ with $\tau_t(\mathcal{F})=\tau_t(\mathcal{G})=t+1$, characterizing their structures or estimating the product of their sizes. In Section \ref{2.2}, we prove Theorem \ref{main-1}. In Section \ref{3}, we establish several inequalities used in this paper.

\section{$\tau_t(\mathcal{F})=\tau_t(\mathcal{G})=t+1$} \label{2}

In this section, we first recall several auxiliary lemmas and facts that will be used throughout the paper. We then investigate the structures and cardinalities of the families $\mathcal{F}$ and $\mathcal{G}$ satisfying $\tau_t(\mathcal{F})=\tau_t(\mathcal{G})=t+1$.

\begin{lem}{\rm(\cite[Lemma~2.1]{Cao-Lu-Lv-Wang-2023})} \label{q<q}
Let $m$ and $i$ be positive integers with $i\leq m$. Then the following hold.
\begin{enumerate}[{\rm(i)}]
  \item ${m\brack i}=q^{i}{m-1\brack i}+{m-1\brack i-1}$ and ${m\brack i}=\frac{q^m-1}{q^i-1}\cdot {m-1\brack i-1}$.
  \item $q^{m-i}<\frac{q^m-1}{q^i-1}<q^{m-i+1}$ and $q^{i-m-1}<\frac{q^i-1}{q^m-1}<q^{i-m}$ if $i<m$.
  \item $q^{i(m-i)}\leq {m\brack i}<q^{i(m-i+1)}$, and $q^{i(m-i)}<{m\brack i}$ if $i<m$.
\end{enumerate}
\end{lem}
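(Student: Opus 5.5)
The statement just before this point is Lemma \ref{q<q}. It is quoted from \cite{Cao-Lu-Lv-Wang-2023}, but I would prove it directly from the product formula
$${m\brack i}=\prod_{j=0}^{i-1}\frac{q^{m-j}-1}{q^{i-j}-1}.$$
I would take the parts in the order (ii), then (i), then (iii), since (iii) follows from (ii) factor by factor.

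For (i), the second identity comes from factoring out the $j=0$ term of the product. This gives ${m\brack i}=\frac{q^m-1}{q^i-1}\prod_{j=1}^{i-1}\frac{q^{m-j}-1}{q^{i-j}-1}$, and the remaining product is exactly ${m-1\brack i-1}$ after reindexing $j\mapsto j-1$.

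For the Pascal-type identity, I would apply the second identity to both ${m\brack i}$ and ${m-1\brack i}$, expressing each in terms of ${m-1\brack i-1}$. This uses the companion formula ${m-1\brack i}=\frac{q^{m-i}-1}{q^i-1}{m-1\brack i-1}$, which is checked directly from the product. The claim then reduces to the scalar identity
$$q^m-1=q^i(q^{m-i}-1)+(q^i-1),$$
which is immediate. The edge case $i=m$ is consistent with the convention ${m-1\brack m}=0$.

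For (ii), assume $i<m$. The lower bound $q^{m-i}(q^i-1)=q^m-q^{m-i}<q^m-1$ holds because $q^{m-i}>1$. The upper bound $q^m-1<q^{m-i+1}(q^i-1)=q^{m+1}-q^{m-i+1}$ is equivalent to $q^{m-i+1}-1<q^{m+1}-q^m=q^m(q-1)$. This holds since $q^m(q-1)\ge q^m\ge q^{m-i+1}$, using $q\ge2$ and $i\ge1$. The second pair of inequalities is obtained by taking reciprocals.

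For (iii), each factor $\frac{q^{m-j}-1}{q^{i-j}-1}$ with $0\le j\le i-1$ is a ratio of the form in (ii), with $m-j$ and $i-j$ in place of $m$ and $i$.
\begin{itemize}
\item If $i<m$, (ii) puts each factor strictly between $q^{m-i}$ and $q^{m-i+1}$. Multiplying the $i$ factors gives $q^{i(m-i)}<{m\brack i}<q^{i(m-i+1)}$.
\item If $i=m$, every factor equals $1$. Then ${m\brack i}=1=q^0$, and $1<q^{i}$ gives the strict upper bound.
\end{itemize}
Nothing here is a real obstacle. The only care needed is with the boundary case $i=m$, where the strict lower inequalities fail and must be replaced by equality.
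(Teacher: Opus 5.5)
Your proof is correct. Each identity and inequality follows from the product formula exactly as you describe, and you handle the boundary case $i=m$ properly: the product formula gives ${m-1\brack m}=0$, and ${m\brack m}=1=q^{0}$ gives equality in the lower bound of (iii). The paper gives no proof of its own here; it only quotes the lemma from the earlier work of Cao, Lu, Lv and Wang, and your direct verification is the standard argument.
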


Let $e$, $r$, $m$, $h$, $m_1$ and $h_1$ be non-negative integers.
Suppose that $W$ is an $(e+r)$-dimensional vector space over $\mathbb{F}_q$ and $L$ is a fixed $r$-subspace of $W$. We say that an $m$-subspace $U$ of $W$ is of type $(m,h)$ if $\dim(U\cap L) = h$.
Define $\mathcal{M}(m,h;e+r,e)$ to be the set of all subspaces of $W$ of type $(m,h)$. Define
$N'(m_1, h_1;m, h;e+r, e)$ to be the number of subspaces of $W$ with type $(m, h)$ containing
a given subspace of type $(m_1, h_1)$. Clearly, $|\mathcal{M}(m,h;e+r,e)|=N'(0, 0;m, h;e+r, e)$.

\begin{lem}{\rm(\cite[Lemma~2.3]{Wang-Guo-Li-2011})} \label{|N'|}
$N'(m_1, h_1;m, h;e+r, e)\neq 0$ if and only if $0\leq h_1\leq h\leq r$ and $0\leq m_1-h_1\leq m-h\leq e$. Moreover, if $N'(m_1, h_1;m, h;e+r, e)\neq 0$, then
$$N'(m_1,h_1;m,h;e+r,e)=q^{(r-h)(m-h-m_1+h_1)}{e-(m_1-h_1)\brack (m-h)-(m_1-h_1)}{r-h_1\brack h-h_1}.$$
\end{lem}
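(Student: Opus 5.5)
The plan is a direct double count: first choose the intersection with $L$, then pass to a quotient space, where the count becomes a standard count of subspaces meeting a fixed subspace trivially.

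Fix a subspace $U_1$ of type $(m_1,h_1)$ and put $A=U_1\cap L$, so that $\dim A=h_1$. Any $U\supseteq U_1$ of type $(m,h)$ determines $B=U\cap L$. This $B$ is an $h$-subspace with $A\subseteq B\subseteq L$. Conversely, every such $B$ satisfies $B\cap U_1=B\cap L\cap U_1=A$. The number of choices of $B$ is ${r-h_1\brack h-h_1}$, since it equals the number of $(h-h_1)$-subspaces of $L/A$. It therefore remains to show that, for each fixed $B$, the number of $m$-subspaces $U$ with $U_1+B\subseteq U$ and $U\cap L=B$ equals $q^{(r-h)(m-h-m_1+h_1)}{e-(m_1-h_1)\brack (m-h)-(m_1-h_1)}$, independently of $B$.

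Put $S=U_1+B$. Then $\dim S=m_1+h-h_1$. By the modular law, since $B\subseteq L$, we have $L\cap S=B+(L\cap U_1)=B$. Now work in $\overline{W}=W/S$, of dimension $N=e+r-m_1-h+h_1$. The image $\overline{L}=(L+S)/S$ has dimension $s=r-h$. Subspaces $U\supseteq S$ of dimension $m$ correspond bijectively to $d$-subspaces $\overline U=U/S$ with $d=m-m_1-h+h_1$.

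The key check is that $U\cap L=B$ holds if and only if $\overline U\cap\overline L=0$.
\begin{itemize}
\item Suppose $\overline U\cap \overline L\neq 0$. Then some $u\in U$ satisfies $u\equiv x \pmod S$ with $x\in L\setminus S$. Since $S\subseteq U$, we get $x\in U\cap L$ but $x\notin B$.
\item Conversely, any $x\in (U\cap L)\setminus B$ lies outside $S$, because $L\cap S=B$. Hence $x$ gives a nonzero vector of $\overline U\cap\overline L$.
\end{itemize}

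This reduces the problem to the classical count of $d$-subspaces of an $N$-space meeting a fixed $s$-space trivially, which is $q^{sd}{N-s\brack d}$. To prove it, choose a complement $C$ of $\overline L$. Such a subspace is the graph of a linear map from some $d$-subspace of $C$ into $\overline L$. There are ${N-s\brack d}$ choices of the subspace and $q^{sd}$ choices of the map. Substituting $N-s=e-(m_1-h_1)$, $s=r-h$ and $d=(m-h)-(m_1-h_1)$ gives exactly the stated formula.

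Finally, the product is nonzero precisely when every Gaussian coefficient is nonzero. This happens when $0\leq h-h_1\leq r-h_1$ and $0\leq (m-h)-(m_1-h_1)\leq e-(m_1-h_1)$, together with $h_1\ge 0$ and $m_1-h_1\ge0$. These are exactly the stated conditions $0\le h_1\le h\le r$ and $0\le m_1-h_1\le m-h\le e$. The one point needing care is the identification $U\cap L=B\iff\overline U\cap\overline L=0$, which rests on $L\cap S=B$. Everything else is routine.
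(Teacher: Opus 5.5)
The paper gives no proof of this lemma. It is quoted from Wang--Guo--Li (Lemma~2.3 there), so there is no in-paper argument to compare yours against, and I have not compared it with the cited source's proof. Judged on its own, your proof is correct and complete:
\begin{itemize}
\item Splitting according to $B=U\cap L$ produces the factor ${r-h_1\brack h-h_1}$.
\item The modular-law identity $L\cap(U_1+B)=B+(L\cap U_1)=B$ is exactly what makes the equivalence $U\cap L=B\iff\overline{U}\cap\overline{L}=0$ hold.
\item The graph-of-a-linear-map count in $W/(U_1+B)$ gives $q^{(r-h)d}{e-(m_1-h_1)\brack d}$ with $d=(m-h)-(m_1-h_1)$. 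This count depends only on $\dim W/(U_1+B)$, $\dim\overline{L}$ and $d$, so it is independent of $B$.
\end{itemize}
As a consistency check, your formula reproduces the value $N'(t,t;t+1,t;\ell+1,\ell-t)=q\theta_{\ell-t}$ that the paper uses in Lemma~\ref{T=1}. The argument is coordinate-free and self-contained, and it shows where each factor comes from.

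One point should be made explicit in the nonvanishing characterization. $N'$ presupposes that a subspace $U_1$ of type $(m_1,h_1)$ exists. Existence already forces $0\le h_1\le r$ and $0\le m_1-h_1\le e$, since $U_1\cap L\subseteq L$ and $\dim(U_1+L)=m_1+r-h_1\le e+r$. This is what makes the upper arguments $r-h_1$ and $e-(m_1-h_1)$ nonnegative. Only then does the criterion ${a\brack b}\neq 0\iff 0\le b\le a$ apply, giving the stated conditions in both directions.

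Your derivation is also consistent in the degenerate ranges: $h<h_1$, $h>r$, $d<0$, and $d>e-(m_1-h_1)$. In each case the relevant set of choices for $B$ or $\overline{U}$ is empty, and the corresponding Gaussian coefficient vanishes under the paper's conventions. So the displayed formula holds unconditionally, and the ``if and only if'' statement follows from it.
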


Let $n$, $x$, $y$, $m$ and $t$ be positive integers. Write
\begin{align}
g(m,x,y,t)&=m{n-t-1\brack x-t-1}+\theta_{t+1}\theta_{y-t+1}^2{n-t-2\brack x-t-2},\label{equ-3}\\
a(x,t)&=q^{x-t-1}\theta_{t+2}{n-t-2\brack x-t-1}+{n-t-2\brack x-t-2}, \label{equ-1}\\
h(x,y,t)&={n-t\brack x-t}-q^{(x-t)(y-t+1)}{n-y-1\brack x-t}+q^{x-t+1}\theta_{t}, \label{equ-2}\\
c_2(x,y,t)&=q^{x-t}\theta_{t+1}\left({n-t-1\brack x-t}-q^{(x-t)(y-t)}{n-y-1\brack x-t}\right)+{n-t-1\brack x-t-1},\label{equ-11}\\
c_1(y,t)&={n-t-1\brack y-t-1}+q^{y-t}\theta_{t+1}.\label{equ-12}
\end{align}

\begin{rema}
It is clear that $g(m,x,y,t)$ is increasing in $m$. By applying Lemma~\ref{|N'|}, we obtain $|\mathcal{A}(k,t; Z)|=a(k,t)$, $|\mathcal{H}(k,t;T,X,Y)| = h(k,\ell,t)$, $|\mathcal{C}_1(\ell,t;M,L)|=c_1(\ell,t)$ and $|\mathcal{C}_2(k,t;M,L)|=c_2(k,\ell,t)$ for Constructions \ref{C1}, \ref{C4} and \ref{C3}.
\end{rema}

In the remaining of this section, we always assume that $n$, $k$, $\ell$ and $t$ are positive integers satisfying $\min\{k, \ell\}\geq t+2$ and $n\geq 2\cdot\max\{k,\ell\}+k+\ell-t+5$.
Let $\mathcal{F} \subseteq {V\brack k}$ and $\mathcal{G} \subseteq {V\brack \ell}$ be maximal cross $t$-intersecting families with $\tau_t(\mathcal{F})=\tau_t(\mathcal{G})=t+1$. Set $\mathcal{T}_f$ and $\mathcal{T}_g$ denote the collections of all $(t+1)$-dimensional $t$-covers of $\mathcal{F}$ and $\mathcal{G}$, respectively.
Denote $M_f = \sum_{A \in \mathcal{T}_f} A$ and $M_g = \sum_{B \in \mathcal{T}_g} B$. By the maximality of $\mathcal{F}$ and $\mathcal{G}$, it is clear that
\begin{align}\label{abs}
\bigcup_{B\in \mathcal{T}_g}[B\rangle_k\subseteq \mathcal{F}~~~~\text{and}~~~~\bigcup_{A\in \mathcal{T}_f}[A\rangle_\ell\subseteq \mathcal{G}.
\end{align}

For a subspace $S$ of $V$, define $\mathcal{F}_S=\{F\in \mathcal{F} : S\subseteq F\}$.
The following facts will be used frequently.

\begin{fact}{\rm (\cite[Remark~(ii)~in~\S9.3]{BCN})}\label{DRG}
If $\mathcal{T}\subseteq {V\brack t+1}$ is a maximal t-intersecting family, then $\mathcal{T}\simeq {M\brack t+1}$ or $\mathcal{T}\simeq[T\rangle_{t+1}$, where $M\in {V\brack t+2}$ and $T\in {V\brack t}$.
\end{fact}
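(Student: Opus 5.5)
The plan is to argue directly from dimension counting, with no appeal to the eigenvalue or distance-regular graph theory behind \cite{BCN}. Two distinct members $A,B$ of a $t$-intersecting family $\mathcal{T}\subseteq{V\brack t+1}$ satisfy $\dim(A\cap B)=t$, so $\dim(A+B)=t+2$. I will also use that both candidate families really are $t$-intersecting:
\begin{itemize}
\item two distinct members of $[T\rangle_{t+1}$ meet exactly in $T$;
\item two $(t+1)$-subspaces of a $(t+2)$-space $M$ meet in dimension at least $2(t+1)-(t+2)=t$.
\end{itemize}
Maximality of $\mathcal{T}$ then gives equality with whichever of these families contains $\mathcal{T}$. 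So it suffices to show that $\mathcal{T}$ lies inside one of them.

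First, $|\mathcal{T}|\geq 2$. Indeed, a single $A\in{V\brack t+1}$ lies in $[T\rangle_{t+1}$ for any $t$-subspace $T\subseteq A$. Assuming $n\geq t+2$, this family has other members, so $\{A\}$ is not maximal. Fix distinct $A,B\in\mathcal{T}$ and set $S=A\cap B\in{V\brack t}$ and $W=A+B\in{V\brack t+2}$.

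The key step is the claim that every $C\in\mathcal{T}$ satisfies $S\subseteq C$ or $C\subseteq W$. Suppose $S\not\subseteq C$. Then $C\cap A$ and $C\cap B$ are $t$-subspaces of $C$, and they are distinct. If they were equal, this common subspace would lie in $A\cap B=S$ and have dimension $t$, hence equal $S$, contradicting $S\not\subseteq C$. Two distinct $t$-subspaces of the $(t+1)$-space $C$ span $C$, so $C\subseteq A+B=W$.

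Finally, I rule out the mixed situation. Suppose $\mathcal{T}$ contained some $C$ with $S\not\subseteq C$ (so $C\subseteq W$) and some $D$ with $D\not\subseteq W$ (so $S\subseteq D$). Then $S\subseteq D\cap W\subsetneq D$, which forces $D\cap W=S$. Hence
$$D\cap C\subseteq D\cap W\cap C=S\cap C,$$
and $\dim(S\cap C)\leq t-1$ because $S\not\subseteq C$. This contradicts $\dim(D\cap C)\geq t$. Therefore either every member contains $S$, so $\mathcal{T}\subseteq[S\rangle_{t+1}$, or every member lies in $W$, so $\mathcal{T}\subseteq{W\brack t+1}$. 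Maximality gives $\mathcal{T}=[S\rangle_{t+1}$ or $\mathcal{T}={W\brack t+1}$, as required.

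The main obstacle is this last exclusion of mixed families; the computation $D\cap W=S$ is the point to handle carefully.
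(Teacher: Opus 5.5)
Your proof is correct. It differs from the paper in that the paper gives no argument of its own: it quotes from Brouwer--Cohen--Neumaier the classification of maximal cliques of the Grassmann graph $J_q(n,t+1)$, and a $t$-intersecting family in ${V\brack t+1}$ is exactly such a clique. You instead derive the fact by elementary dimension counting, and every step holds:
\begin{itemize}
\item distinct members meet in exactly a $t$-space;
\item a member $C\neq A,B$ not containing $S=A\cap B$ is spanned by the two distinct $t$-spaces $C\cap A$ and $C\cap B$, hence lies in $W=A+B$;
\item a member $D\not\subseteq W$ must satisfy $D\cap W=S$, which forces $\dim(D\cap C)\le\dim(S\cap C)\le t-1$ for any $C\subseteq W$ with $S\not\subseteq C$.
\end{itemize}

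Your route also gives a slightly stronger statement. Maximality is used only to exclude $|\mathcal{T}|\le 1$ and to upgrade containment to equality. So you actually show that every $t$-intersecting family containing two distinct members $A,B$ lies in $[A\cap B\rangle_{t+1}$ or in ${A+B\brack t+1}$, for these specific subspaces. This is precisely the form in which the paper uses the fact:
\begin{itemize}
\item in Lemma~\ref{equivalent}, it is applied to the non-maximal family $\mathcal{T}_f\cup\mathcal{T}_g$;
\item in Lemma~\ref{T=2-C}, it is used to conclude $\mathcal{U}_i\subseteq{A_1+A_2\brack t+1}$, and that each $B\in\mathcal{T}_g$ satisfies $T\subseteq B$ or $B\in{M_f\brack t+1}$, where $T=A_1\cap A_2$ and $M_f=A_1+A_2$.
\end{itemize}
Your key claim delivers these directly. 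The cited maximal-clique statement needs two small extra steps: extend to a maximal family, then identify its centre or top as $A_1\cap A_2$ or $A_1+A_2$ by comparing dimensions.

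Two trivial loose ends remain. The empty family is not maximal. When $n=t+1$, the family $\{V\}$ equals $[T\rangle_{t+1}$ for any $T$, so your assumption $n\ge t+2$ costs nothing.
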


\begin{fact}{\rm(\cite[Lemma~2.7]{Cao-Lu-Lv-Wang-2023})} \label{s-tt}
  $\mathcal{T}_f$ and $\mathcal{T}_g$ are cross $t$-intersecting families.
\end{fact}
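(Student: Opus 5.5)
The statement to prove is Fact~\ref{s-tt}: every $A\in\mathcal{T}_f$ and every $B\in\mathcal{T}_g$ satisfy $\dim(A\cap B)\geq t$. I would argue by contradiction. Suppose $A\in\mathcal{T}_f$ and $B\in\mathcal{T}_g$ have $\dim(A\cap B)\leq t-1$. By the maximality inclusions (\ref{abs}), every $k$-subspace containing $B$ lies in $\mathcal{F}$, and $A$ is a $t$-cover of $\mathcal{F}$. So the contradiction will come from a single $k$-subspace $F\supseteq B$ with $\dim(F\cap A)\leq t-1$: such an $F$ lies in $\mathcal{F}$ but violates the $t$-cover property of $A$.

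To build $F$, work in the quotient $V/B$, which has dimension $n-t-1$. The image of $A$ there is $\bar A=(A+B)/B$, of dimension $\dim A-\dim(A\cap B)\leq t+1$. I would choose $F=B\oplus W$, where $W$ is a $(k-t-1)$-subspace whose image in $V/B$ meets $\bar A$ trivially. Such a $W$ exists as long as $(k-t-1)+(t+1)\leq n-t-1$, i.e.\ $n\geq k+t+1$, which the standing assumption $n\geq 2\max\{k,\ell\}+k+\ell-t+5$ guarantees. For this choice, $(F\cap A)+B$ maps into $\bar A\cap \bar W=0$ in $V/B$, so $F\cap A\subseteq B$. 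Hence $F\cap A\subseteq A\cap B$, giving $\dim(F\cap A)\leq t-1$. This contradicts $F\in\mathcal{F}$ with $A$ a $t$-cover, so $\mathcal{T}_f$ and $\mathcal{T}_g$ are cross $t$-intersecting.

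The only point needing care is existence of $W$, which is a standard dimension count (complements exist generically in a large quotient). I expect no real obstacle. The argument does not even use $\tau_t=t+1$ beyond the fact that $\mathcal{T}_f$ and $\mathcal{T}_g$ are nonempty.
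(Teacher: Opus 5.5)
Your proof is correct: by (\ref{abs}) every $k$-subspace through $B$ lies in $\mathcal{F}$. Your choice $F=B\oplus W$, with $(W+B)/B$ meeting $(A+B)/B$ trivially (possible since $n\geq k+t+1$), gives $F\cap A=A\cap B$ of dimension at most $t-1$, which contradicts $A$ being a $t$-cover of $\mathcal{F}$. The paper gives no proof of its own, importing the fact from \cite[Lemma~2.7]{Cao-Lu-Lv-Wang-2023}, so there is nothing to compare against directly; your maximality-plus-avoidance argument is a complete, self-contained proof of it.
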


The following lemma provides an upper bound on $|\mathcal{F}||\mathcal{G}|$ and will be used frequently in the subsequent proofs.

\begin{lem}\label{not-contain}
We have $|\mathcal{F}|\leq g(|\mathcal{T}_g|,k,\ell,t)$ and $|\mathcal{G}|\leq g(|\mathcal{T}_f|,\ell,k,t)$.
\end{lem}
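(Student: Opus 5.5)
We prove the bound for $|\mathcal{F}|$; the bound for $|\mathcal{G}|$ follows by exchanging the roles of $(\mathcal{F},k)$ and $(\mathcal{G},\ell)$. Split $\mathcal{F}$ according to whether a member contains some element of $\mathcal{T}_g$:
$$\mathcal{F}=\Bigl(\bigcup_{B\in\mathcal{T}_g}\mathcal{F}_B\Bigr)\cup\mathcal{F}',\qquad \mathcal{F}'=\{F\in\mathcal{F}: B\not\subseteq F\ \text{for all } B\in\mathcal{T}_g\}.$$
For the first part we use $|\mathcal{F}_B|\le |[B\rangle_k|={n-t-1\brack k-t-1}$, so a union bound gives at most $|\mathcal{T}_g|{n-t-1\brack k-t-1}$. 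This is exactly the first term of $g(|\mathcal{T}_g|,k,\ell,t)$. It remains to show
$$|\mathcal{F}'|\le \theta_{t+1}\theta_{\ell-t+1}^2{n-t-2\brack k-t-2}.$$

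To bound $\mathcal{F}'$, fix a $t$-cover $A\in\mathcal{T}_f$, which exists since $\tau_t(\mathcal{F})=t+1$. Every $F\in\mathcal{F}$ meets $A$ in dimension at least $t$. Hence $F\supseteq C$ for some $t$-subspace $C$ of $A$, and there are $\theta_{t+1}$ choices of $C$. Fix such a $C$ and consider $F\in\mathcal{F}'$ with $C\subseteq F$.

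Since $\tau_t(\mathcal{G})=t+1>t$, some $G\in\mathcal{G}$ satisfies $C\not\subseteq G$. Because $\dim(F\cap G)\ge t$ and $\dim(C\cap G)\le t-1$, the subspace $F$ must contain a vector of $G$ outside $C$. Thus $F\supseteq C+\langle v\rangle=:B'$ for a $(t+1)$-subspace $B'$ with $C\subseteq B'\subseteq C+G$. The number of such $B'$ is at most ${\ell+t-t\brack 1}$, which is at most about $\theta_{\ell}$.

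Next, $B'$ is not a $t$-cover of $\mathcal{G}$. If it were, then $B'\in\mathcal{T}_g$, and $F\supseteq B'$ would contradict $F\in\mathcal{F}'$. So there is $G'\in\mathcal{G}$ with $\dim(B'\cap G')\le t-1$. Repeating the argument, $F$ contains a $(t+2)$-subspace $B''$ with $B'\subseteq B''\subseteq B'+G'$. The number of choices of $B''$ is at most ${\ell+t+1-(t+1)\brack 1}$, roughly $\theta_{\ell+1}$, up to an index shift that I expect can be arranged to be $\theta_{\ell-t+1}$ by choosing $G,G'$ relative to $C$ more carefully.

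Each $B''$ lies in ${n-t-2\brack k-t-2}$ of the $k$-subspaces. Multiplying the counts gives $\theta_{t+1}\cdot\theta_{\ell-t+1}\cdot\theta_{\ell-t+1}\cdot{n-t-2\brack k-t-2}$, which is the second term of $g$.

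The main obstacle is getting the factors exactly $\theta_{\ell-t+1}$ rather than something like $\theta_{\ell}$. The first thing I would try is to count extensions inside a quotient: take the new vector from $G$ modulo $C$. Since $\dim(C\cap G)$ could be as small as $0$, I would instead choose $G$ so that $\dim(C\cap G)=t-1$, and argue that such a $G$ exists. Then the quotient $(C+G)/C$ has dimension $\ell-t+1$, giving $\theta_{\ell-t+1}$ choices of $B'$.

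Similarly, for the second step I would pick $G'$ with $\dim(B'\cap G')$ as large as possible, namely $t$. Then $(B'+G')/B'$ has dimension $\ell-t$, giving at most $\theta_{\ell-t}\le\theta_{\ell-t+1}$ choices of $B''$.

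The existence of these $G$ and $G'$ is where I would use the maximality of $\mathcal{G}$ and the structure of $\mathcal{T}_g$. I would try Fact~\ref{s-tt}: $\mathcal{T}_f$ and $\mathcal{T}_g$ are cross $t$-intersecting, so every $B\in\mathcal{T}_g$ meets $A$ in dimension at least $t$. Combined with~(\ref{abs}), this should force members of $\mathcal{G}$ to meet $C$ in large dimension. If exact existence fails, a cruder choice still gives $\dim(C+G)\le \ell+1$ relative to some $t$-cover, and I would adjust the counting to land on $\theta_{\ell-t+1}$.
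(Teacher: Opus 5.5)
Your decomposition of $\mathcal{F}$ and the bound $|\mathcal{T}_g|{n-t-1\brack k-t-1}$ for members containing an element of $\mathcal{T}_g$ are exactly what the paper does. The paper then simply cites Proposition~2.3 of Liu--Wang--Yao for $|\mathcal{F}'|\leq\theta_{t+1}\theta_{\ell-t+1}^2{n-t-2\brack k-t-2}$.

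Your direct proof of that bound has a genuine gap, at the point you flagged yourself. Each extension step $S\subset S+\langle v\rangle$ with $v\in G$ gives $\theta_{\ell-d}$ candidates, where $d=\dim(S\cap G)\leq t-1$ for the witness $G$. As written, this yields only something like $\theta_{t+1}\theta_{\ell}^2{n-t-2\brack k-t-2}$. That is not the stated bound, and Section~\ref{3} uses the exact factor $\theta_{y-t+1}^2$.

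Your proposed repair relies on witnesses you never produce.
\begin{itemize}
\item Nothing forces a $G\in\mathcal{G}$ with $\dim(C\cap G)=t-1$. When $C$ lies in no member of $\mathcal{T}_g$, Fact~\ref{s-tt} only gives $\dim(G\cap C)\geq t-2$.
\item The second step is self-contradictory. A $G'$ with $\dim(B'\cap G')=t$ does not witness that $B'$ fails to be a $t$-cover, and it forces nothing in $F$ beyond $B'$.
\end{itemize}
Any genuine witness has $\dim(B'\cap G')\leq t-1$. So $\theta_{\ell-t}$ is never available, and $\theta_{\ell-t+1}$ is the best case.

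The missing idea is that no special witness is needed.
\begin{itemize}
\item Let $S$ be the current subspace ($S=C$ or $S=B'$), with $s=\dim S$. Let $G\in\mathcal{G}$ be \emph{any} member with $d=\dim(S\cap G)\leq t-1$.
\item For $F\in\mathcal{F}$ with $S\subseteq F$, we have $S\cap F\cap G=S\cap G$. Hence $\dim\big(S+(F\cap G)\big)\geq s+t-d$.
\item So $W_F=(F\cap(S+G))/S$ has dimension at least $t-d$ inside the $(\ell-d)$-dimensional space $(S+G)/S$.
\item Fix, independently of $F$, an $(\ell-t+1)$-subspace $U$ of $(S+G)/S$; this exists since $d\leq t-1$.
\item Since $(t-d)+(\ell-t+1)>\ell-d$, we get $W_F\cap U\neq 0$. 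So $F$ contains an $(s+1)$-subspace $R\supseteq S$ with $R/S\subseteq U$.
\item There are only $\theta_{\ell-t+1}$ such $R$, whatever $d$ is.
\end{itemize}

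Apply this to each $C\in{A\brack t}$, which is not a $t$-cover of $\mathcal{G}$ because $\tau_t(\mathcal{G})=t+1$. Then apply it to each resulting candidate $B'$. Candidates $B'\in\mathcal{T}_g$ lie in no member of $\mathcal{F}'$ and can be discarded; the remaining ones are not $t$-covers, so they have witnesses. This gives a fixed set of at most $\theta_{t+1}\theta_{\ell-t+1}^2$ subspaces of dimension $t+2$, one of which lies in every $F\in\mathcal{F}'$. The bound on $|\mathcal{F}'|$ follows.
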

\proof By symmetry, we only prove that $|\mathcal{F}|\leq g(|\mathcal{T}_g|,k,\ell,t)$.
Let $\mathcal{F}^{\prime}$ be the collection of all members of $\mathcal{F}$ that do not contain any elements of $\mathcal{T}_g$.  It is obvious that
\begin{align*}
|\mathcal{F}|\leq |\mathcal{T}_g|{n-t-1\brack k-t-1}+ |\mathcal{F}^{\prime}|.
\end{align*}
By \cite[Proposition~2.3]{Liu-Wang-Yao-2026}, we have
$|\mathcal{F}^{\prime}|\leq \theta_{t+1}\theta_{\ell-t+1}^2{n-t-2\brack k-t-2},$
and so the lemma follows.$\qed$

As shown in Lemma \ref{not-contain}, $|\mathcal{T}_g|$ directly determines the leading term of the inequality. In particular, when $n$ is sufficiently large, the leading term dominates the bound in magnitude, and the second term becomes negligible by comparison. Thus, in the following, we will give the upper bounds of $|\mathcal{F}||\mathcal{G}|$ by estimating the sizes of $\mathcal{T}_f$ and $\mathcal{T}_g$.

We divide the discussion into the following cases: (i) both $\mathcal{T}_f$ and $\mathcal{T}_g$ are $t$-intersecting families; (ii) exactly one of them is $t$-intersecting; (iii) neither of them is $t$-intersecting.
By Fact \ref{s-tt}, both $\mathcal{T}_f$ and $\mathcal{T}_g$ are $t$-intersecting if and only if $\mathcal{T}_f\cup \mathcal{T}_g$ is $t$-intersecting.

\begin{lem}\label{m}
Suppose $\mathcal{T}_f\cup \mathcal{T}_g$ is $t$-intersecting with $\tau_t(\mathcal{T}_f\cup \mathcal{T}_g)=t$.
\begin{enumerate}[{\rm(i)}]
  \item We have $|\mathcal{T}_f|\leq \theta_{k-t+1}$ and $|\mathcal{T}_g|\leq \theta_{\ell-t+1 }$.
  \item Assume that $(|\mathcal{T}_f|,|\mathcal{T}_g|)=\left(\theta_{k-t+1}, \theta_{\ell-t+1}\right)$. Then $\dim(M_f\cap M_g)\geq t+1$, and the following statements hold.
\begin{enumerate}
  \item[{\rm(iia)}] If $\dim(M_f\cap M_g)\geq t+2$, then $(\mathcal{F},\mathcal{G})= (\mathcal{H}(k,t;T,X,Y),\mathcal{H}(\ell,t;T,Y,X))$ for some $X\in {V\brack k+1}$ and $Y\in {V\brack \ell+1}$ satisfy $\dim(X\cap Y)\geq t+2$ and $T\in {X\cap Y\brack t}$.
  \item[{\rm(iib)}] If $\dim(M_f\cap M_g)= t+1$, then $$|\mathcal{F}||\mathcal{G}| \leq\left(h(k,\ell,t)-q^{k-t}\left(q\theta_{t}-1\right)\right)\left(h(\ell,k,t)-q^{\ell-t} \left(q\theta_{t}-1\right)\right).$$
\end{enumerate}
\end{enumerate}
\end{lem}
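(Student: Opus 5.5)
The plan is to exploit the fact that all members of $\mathcal{T}_f\cup\mathcal{T}_g$ contain a common $t$-subspace $T$. The proof of (i) then turns into a dimension count in a quotient space. For (ii), I will first locate $\mathcal{F}$ and $\mathcal{G}$ inside Construction \ref{C4}-type families, then use maximality in case (iia) and a direct count of forbidden members in case (iib).

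\emph{Step 1 (part (i)).} Since $\tau_t(\mathcal{T}_f\cup\mathcal{T}_g)=t$, there is $T\in{V\brack t}$ contained in every $A\in\mathcal{T}_f$ and every $B\in\mathcal{T}_g$. Because $\tau_t(\mathcal{G})=t+1$, $T$ is not a $t$-cover of $\mathcal{G}$, so some $G_0\in\mathcal{G}$ satisfies $T\not\subseteq G_0$.

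For $B\in\mathcal{T}_g$ we have $\dim(B\cap G_0)\ge t$. Also $B\cap G_0\not\supseteq T$, since otherwise $B\cap G_0=T\subseteq G_0$. As both $T$ and $B\cap G_0$ lie in the $(t+1)$-space $B$, this gives $\dim(T\cap G_0)\ge t-1$, hence $\dim(T\cap G_0)=t-1$. It also gives $B=T+(B\cap G_0)\subseteq Y:=T+G_0$, where $\dim Y=\ell+1$.

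Thus $\mathcal{T}_g\subseteq[T,Y]_{t+1}$, and $|[T,Y]_{t+1}|=\theta_{\ell-t+1}$. Symmetrically, choosing $F_0\in\mathcal{F}$ with $T\not\subseteq F_0$ gives $\mathcal{T}_f\subseteq[T,X]_{t+1}$ with $X:=T+F_0\in{V\brack k+1}$ and $|\mathcal{T}_f|\le\theta_{k-t+1}$.

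\emph{Step 2 (setup for (ii)).} In the equality case, $\mathcal{T}_f=[T,X]_{t+1}$ and $\mathcal{T}_g=[T,Y]_{t+1}$, so $M_f=X$ and $M_g=Y$. Since $F_0\subseteq X$ and $G_0\subseteq Y$, and $\mathcal{F},\mathcal{G}$ are cross $t$-intersecting, we get $\dim(F_0\cap G_0)\ge t$. Moreover $F_0\cap G_0\not\subseteq T$, because it has dimension at least $t$ but does not contain $T$. Hence $L':=T+(F_0\cap G_0)\subseteq X\cap Y$ has dimension at least $t+1$, which proves $\dim(M_f\cap M_g)\ge t+1$.

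\emph{Step 3 (containment in the $\mathcal{H}$-families).} Let $F\in\mathcal{F}$.

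If $T\not\subseteq F$, then $\dim(F\cap A)\ge t$ for every $A\in[T,X]_{t+1}$. The argument of Step 1 gives $\dim(F\cap T)=t-1$ and $A\subseteq T+F$. Since the sum of all such $A$ is $X$, we get $X\subseteq T+F$, and comparing dimensions yields $X=T+F$, so $F\subseteq X$.

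If $T\subseteq F$ but $\dim(F\cap Y)=t$, then $F\cap Y=T$. This gives $\dim(F\cap G_0)\le\dim(T\cap G_0)=t-1$, a contradiction.

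So $\mathcal{F}\subseteq\mathcal{H}(k,t;T,X,Y)$, and symmetrically $\mathcal{G}\subseteq\mathcal{H}(\ell,t;T,Y,X)$. This step does not use $\dim(X\cap Y)\ge t+2$.

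In case (iia), the pair of $\mathcal{H}$-families is cross $t$-intersecting, as noted after Construction \ref{C3}. Maximality of $(\mathcal{F},\mathcal{G})$ then forces equality.

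\emph{Step 4 (case (iib)), the main counting step.} Now $X\cap Y=L'$ has dimension $t+1$. Take $F$ in the second part of the $\mathcal{H}$-family, i.e.\ $F\subseteq X$ with $\dim(F\cap T)=t-1$. Then $F\cap G_0\subseteq X\cap Y=L'$. A hyperplane of $X$ meets $L'$ in dimension at least $t$, and here exactly $t$ because $T\not\subseteq F$. The same holds for $G_0\cap L'$. Therefore $\dim(F\cap G_0)\ge t$ forces $F\cap L'=G_0\cap L'=:W$, a fixed hyperplane of $L'$ not containing $T$.

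The second part of $\mathcal{H}(k,t;T,X,Y)$ has $\theta_{k+1}-\theta_{k-t+1}=q^{k-t+1}\theta_t$ members. Among them, those with $F\cap L'=W$ number $\theta_{k-t+1}-\theta_{k-t}=q^{k-t}$. So at least $q^{k-t}(q\theta_t-1)$ members of $\mathcal{H}(k,t;T,X,Y)$ are excluded from $\mathcal{F}$, and $|\mathcal{F}|\le h(k,\ell,t)-q^{k-t}(q\theta_t-1)$. The symmetric argument with $F_0$ bounds $|\mathcal{G}|$, and multiplying the two bounds gives (iib).

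I expect this last counting to be the delicate point. It requires checking that $h(k,\ell,t)$ counts exactly the $\mathcal{H}$-family even when $\dim(X\cap Y)=t+1$. The first part of the family depends only on $T$ and $Y$, and the two parts are disjoint, so the count should still be valid, but this needs to be confirmed.
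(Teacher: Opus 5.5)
Your proposal is correct and follows the paper's proof essentially step by step. The steps match: the common $t$-space $T$ confines $\mathcal{T}_g$ to $[T,T+G_0]_{t+1}$ (the paper phrases this as $M_f\subseteq F+T$); then $\dim(M_f\cap M_g)\geq\dim(T+(F_0\cap G_0))\geq t+1$; then $\mathcal{F},\mathcal{G}$ lie in the two parts of the $\mathcal{H}$-families, with maximality giving (iia); and in (iib) every member of the second part meets $M_f\cap M_g$ in the same $t$-space, so only $q^{k-t}$ of the $q^{k-t+1}\theta_t$ hyperplanes survive. The count you flagged is fine, and the paper uses it the same way: the first part of $\mathcal{H}(k,t;T,X,Y)$ depends only on $T\subseteq Y$ with $\dim Y=\ell+1$, the second only on $T\subseteq X$ with $\dim X=k+1$, and the parts are disjoint (one contains $T$, the other does not), so the family has size $h(k,\ell,t)$ whatever $\dim(X\cap Y)$ is.
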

\proof (i) Since $\tau_t(\mathcal{T}_f\cup \mathcal{T}_g)=t$, there exists a $t$-subspace $T$ of $V$ such that  $\mathcal{T}_f\cup \mathcal{T}_g\subseteq [T\rangle_{t+1}$.
Hence, $\mathcal{T}_f\subseteq [T,M_f]_{t+1}$, $\mathcal{T}_g\subseteq [T,M_g]_{t+1}$, and therefore
\begin{align}\label{T=M-t}
|\mathcal{T}_f|\leq \theta_{\dim M_f-t} ~~~~\text{and}~~~~|\mathcal{T}_g|\leq\theta_{\dim M_g-t}.
\end{align}

It follows from \eqref{abs} and $\tau_t(\mathcal{F})=\tau_t(\mathcal{G})=t+1$ that $\mathcal{F}_T$, $\mathcal{G}_T$, $\mathcal{F}\setminus \mathcal{F}_T$ and $\mathcal{G}\setminus \mathcal{G}_T$ are all nonempty. Let $F\in \mathcal{F}\setminus \mathcal{F}_T$ and $A\in \mathcal{T}_f$. Then $\dim(F\cap T)\leq t-1$ and $T\subseteq A$. Hence,
$$\dim((F\cap A)+T)=\dim(F\cap A)+\dim T-\dim(F\cap A\cap T)\geq t+1.$$
Since $(F\cap A)+T\subseteq A$ and $\dim A=t+1$, we obtain $A=(F\cap A)+T\subseteq F+T$.
It follows from $T\nsubseteq F$ and $T\subseteq A$ that $\dim(F\cap T)=t-1$.
Therefore, for all $F\in \mathcal{F}\setminus \mathcal{F}_T$, we have $M_f=\sum_{A\in \mathcal{T}_f}A\subseteq F+T$,
and so
$\dim M_f\leq \dim(F+ T)=k+1$. Similarly, for each $G\in \mathcal{G}\setminus \mathcal{G}_T$, we also have $\dim(G\cap T)=t-1$ and $M_g\subseteq G+T$, and so $\dim M_g\leq \ell+1$. Combining this with \eqref{T=M-t}, we obtain (i) follows.

(ii) Assume that $(|\mathcal{T}_f|, |\mathcal{T}_g|)=\left(\theta_{k-t+1},\theta_{\ell-t+1}\right)$. Let $F\in \mathcal{F}\setminus \mathcal{F}_T$ and $G\in \mathcal{G}\setminus \mathcal{G}_T$. By the proof of (i), we have
\begin{align}\label{M=FG+T}
\dim(F\cap T)=\dim(G\cap T)=t-1,~~~~
M_f=F+T~~~~\text{and}~~~~M_g=G+T.
\end{align}
Then $\dim M_f=k+1$, $\dim M_g=\ell+1$ and
$$\dim(M_f\cap M_g)=\dim((F+T)\cap (G+T))\geq \dim(T+(F\cap G))\geq t+1.$$

Set
\begin{align*}
\mathcal{F}_0=\left\{F\in {V\brack k} : T\subseteq F,~\dim(F\cap M_g)\geq t+1\right\},\\
\mathcal{G}_0=\left\{G\in {V\brack \ell} : T\subseteq G,~\dim(G\cap M_f)\geq t+1\right\}.
\end{align*}
For each $F'\in \mathcal{F}_T$, by \eqref{M=FG+T}, we have
$$\dim(F'\cap M_g)=\dim(F'\cap (G+T))\geq \dim((F'\cap G)+(F'\cap T))\geq t+1.$$
Hence, $F'\in\mathcal{F}_0$, and so $\mathcal{F}_T\subseteq \mathcal{F}_0$. Similarly, one can prove $\mathcal{G}_T\subseteq \mathcal{G}_0$. Therefore,
$$\mathcal{F}\subseteq \mathcal{F}_0\cup\left(\mathcal{F}\setminus \mathcal{F}_T\right)~~~~\text{and}~~~~\mathcal{G}\subseteq \mathcal{G}_0\cup\left(\mathcal{G}\setminus \mathcal{G}_T\right).$$

(iia) Suppose $\dim(M_f\cap M_g)\geq t+2$.
Set
$$\mathcal{F}_1=\left\{F\in {M_f\brack k} : \dim(F\cap T)=t-1\right\}~~\text{and}~~
\mathcal{G}_1= \left\{G\in {M_g\brack \ell} : \dim(G\cap T)=t-1\right\}.$$
It follows from \eqref{M=FG+T} that $\mathcal{F}\setminus \mathcal{F}_T\subseteq \mathcal{F}_1$ and $\mathcal{G}\setminus \mathcal{G}_T\subseteq \mathcal{G}_1$. Hence, $\mathcal{F}\subseteq \mathcal{F}_0\cup \mathcal{F}_1$ and
$\mathcal{G}\subseteq \mathcal{G}_0\cup\mathcal{G}_1.$
It is routine to check that $\mathcal{F}_0\cup \mathcal{F}_1$ and $\mathcal{G}_0\cup \mathcal{G}_1$ are cross $t$-intersecting. By the maximality of $\mathcal{F}$ and $\mathcal{G}$, we have
$\mathcal{F}=\mathcal{F}_0\cup \mathcal{F}_1$ and $\mathcal{G}=\mathcal{G}_0\cup \mathcal{G}_1$.
It follows directly that $T\in {M_f\cap M_g\brack t}$ and $$(\mathcal{F},~\mathcal{G})= (\mathcal{H}(k,t;T,M_f,M_g),~\mathcal{H}(\ell,t;T,M_g,M_f)).$$
Then (iia) holds.

(iib) Suppose $\dim(M_f\cap M_g)=t+1$. Let $S=M_f\cap M_g$, $F_1\in \mathcal{F}\setminus \mathcal{F}_T$ and $G_1\in \mathcal{G}\setminus \mathcal{G}_T$.
Then $\dim(F_1\cap S)\leq t$, $\dim(G_1\cap S)\leq t$, and $F_1\cap G_1\subsetneq S$ by \eqref{M=FG+T}. It follows from $\dim(F_1\cap G_1)\geq t$ that $\dim(F_1\cap G_1\cap S)=t$.
Hence, $F_1\cap G_1\cap S=F_1\cap S=G_1\cap S$, which is a fixed $t$-subspace of $S$.
Therefore,
\begin{align*}
\mathcal{F}\subseteq\mathcal{F}_0\cup \left\{F\in {M_f\brack k} : F\cap S=F_1\cap S\right\},~~~~
\mathcal{G}\subseteq\mathcal{G}_0\cup \left\{G\in {M_g\brack \ell} : G\cap S=F_1\cap S\right\}.
\end{align*}
By Lemma~\ref{|N'|}, we obtain $|\mathcal{F}|\leq h(k,\ell,t)-q^{k-t}\left(q\theta_{t}-1\right)$ and $|\mathcal{G}|\leq h(\ell,k,t)-q^{\ell-t}\left(q\theta_{t}-1\right)$. Then (iib) holds.$\qed$

\begin{lem}\label{equivalent}
  Suppose that $\mathcal{T}_f\cup\mathcal{T}_g$ is a $t$-intersecting family with $\tau_t(\mathcal{T}_f\cup\mathcal{T}_g)= t+1$. Then $\max\big\{|\mathcal{T}_f|, |\mathcal{T}_g|\big\}\leq \theta_{t+2}$. Moreover, when $|\mathcal{T}_f|=|\mathcal{T}_g|=\theta_{t+2}$, we have $(\mathcal{F},~\mathcal{G})=(\mathcal{A}(k,t; Z),~ \mathcal{A}(\ell,t;Z))$ for some $Z\in {V\brack t+2}$.
\end{lem}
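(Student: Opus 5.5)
The plan is to reduce the statement to Fact~\ref{DRG}. By hypothesis $\mathcal{T}_f\cup\mathcal{T}_g\subseteq {V\brack t+1}$ is $t$-intersecting, so I would first enlarge it to a maximal $t$-intersecting family $\mathcal{T}\subseteq{V\brack t+1}$. By Fact~\ref{DRG}, either $\mathcal{T}=[T\rangle_{t+1}$ for some $T\in{V\brack t}$, or $\mathcal{T}={Z\brack t+1}$ for some $Z\in{V\brack t+2}$. The first alternative is impossible: it would make $T$ a $t$-subspace contained in every member of $\mathcal{T}_f\cup\mathcal{T}_g$, so $\tau_t(\mathcal{T}_f\cup\mathcal{T}_g)=t$, contrary to the hypothesis. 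Hence $\mathcal{T}_f\cup\mathcal{T}_g\subseteq{Z\brack t+1}$, and both $|\mathcal{T}_f|$ and $|\mathcal{T}_g|$ are at most $\left|{Z\brack t+1}\right|={t+2\brack t+1}=\theta_{t+2}$.

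For the equality case, suppose $|\mathcal{T}_f|=|\mathcal{T}_g|=\theta_{t+2}$. Then $\mathcal{T}_f=\mathcal{T}_g={Z\brack t+1}$, so every $(t+1)$-subspace $A$ of $Z$ is a $t$-cover of both $\mathcal{F}$ and $\mathcal{G}$. I would next show that this forces $\dim(F\cap Z)\geq t+1$ for every $F\in\mathcal{F}$. Suppose instead $\dim(F\cap Z)\leq t$. Choose a $t$-subspace $W$ of $Z$ with $F\cap Z\subseteq W$. The hyperplanes of $Z$ containing $W$ number only $\theta_2<\theta_{t+2}$, so there is $A\in{Z\brack t+1}$ with $W\not\subseteq A$. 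Then
\[
\dim(A\cap F)=\dim(A\cap F\cap Z)\leq\dim(A\cap W)=t-1,
\]
contradicting that $A$ is a $t$-cover of $\mathcal{F}$. This gives $\mathcal{F}\subseteq\mathcal{A}(k,t;Z)$, and the same argument gives $\mathcal{G}\subseteq\mathcal{A}(\ell,t;Z)$.

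To finish, I would check that $(\mathcal{A}(k,t;Z),\mathcal{A}(\ell,t;Z))$ is cross $t$-intersecting. For $F\in\mathcal{A}(k,t;Z)$ and $G\in\mathcal{A}(\ell,t;Z)$, the subspaces $F\cap Z$ and $G\cap Z$ each have dimension at least $t+1$ inside the $(t+2)$-space $Z$, so
\[
\dim(F\cap G)\geq\dim(F\cap G\cap Z)\geq (t+1)+(t+1)-(t+2)=t.
\]
Since $\mathcal{F}$ and $\mathcal{G}$ are maximal cross $t$-intersecting families contained in this pair, they must equal it, giving $(\mathcal{F},\mathcal{G})=(\mathcal{A}(k,t;Z),\mathcal{A}(\ell,t;Z))$. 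I expect no serious obstacle. The only points needing care are that the maximal extension $\mathcal{T}$ cannot be of the form $[T\rangle_{t+1}$, and the counting step guaranteeing a $(t+1)$-subspace of $Z$ that avoids containing a given $t$-subspace.
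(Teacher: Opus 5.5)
Your proposal is correct and follows essentially the same route as the paper. You place $\mathcal{T}_f\cup\mathcal{T}_g$ inside ${Z\brack t+1}$ via Fact~\ref{DRG}, get $\mathcal{T}_f=\mathcal{T}_g={Z\brack t+1}$ in the equality case, deduce $\mathcal{F}\subseteq\mathcal{A}(k,t;Z)$ and $\mathcal{G}\subseteq\mathcal{A}(\ell,t;Z)$, and finish by maximality. Along the way you spell out steps the paper leaves implicit: extending to a maximal family before invoking Fact~\ref{DRG}, excluding the $[T\rangle_{t+1}$ alternative via $\tau_t=t+1$, the hyperplane argument for $\dim(F\cap Z)\geq t+1$, and the cross $t$-intersection of the $\mathcal{A}$ pair.
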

\proof Since $\mathcal{T}_f\cup\mathcal{T}_g$ is a $t$-intersecting family with $\tau_t(\mathcal{T}_f\cup\mathcal{T}_g)= t+1$, by Fact~\ref{DRG}, we can choose a $(t+2)$-subspace $M$ such that $\mathcal{T}_f\cup\mathcal{T}_g\subseteq {M\brack t+1}$. Then $\max\big\{|\mathcal{T}_f|, |\mathcal{T}_g|\big\}\leq \theta_{t+2}$.

Suppose $|\mathcal{T}_f|=\theta_{t+2}$. From $|{M\brack t+1}|=\theta_{t+2}$, it follows that $\mathcal{T}_f={M\brack t+1}$. Then $\dim(F\cap M)\geq t+1$ for each $F\in \mathcal{F}$, and so $\mathcal{F}\subseteq \mathcal{A}(k,t;M)$.
Similarly, we have $\mathcal{G}\subseteq\mathcal{A}(\ell,t;M)$ when $|\mathcal{T}_g|=\theta_{t+2}$.
By the maximality of $\mathcal{F}$ and $\mathcal{G}$, we have $(\mathcal{F},\mathcal{G})=(\mathcal{A}(k,t;M), \mathcal{A}(\ell,t;M))
$. Taking $Z=M$ completes the proof.$\qed$

In the following lemmas, we consider the case when $\mathcal{T}_f\cup\mathcal{T}_g$ is not $t$-intersecting. By Fact~\ref{s-tt}, at least one of $\mathcal{T}_f$ and $\mathcal{T}_g$ is not $t$-intersecting. Without loss of generality, we may assume that $\mathcal{T}_g$ is not $t$-intersecting. Then there exist $B_1,~B_2\in \mathcal{T}_g$ such that $\dim(B_1\cap B_2)\leq t-1$. Let $A\in \mathcal{T}_f$. It follows from Fact \ref{s-tt} and $\dim A=t+1$ that
\begin{align*}
t-1\geq \dim(A\cap B_1\cap B_2)&=\dim(A\cap B_1)+\dim(A\cap B_2)-\dim((A\cap B_1)+(A\cap B_2))\\
&\geq 2t-\dim(A\cap (B_1+ B_2))\geq t-1.
\end{align*}
Therefore,
\begin{align}\label{bbabb}
\dim(B_1\cap B_2)=t-1,~~\text{and}~~B_1\cap B_2\subseteq A\subseteq B_1+B_2~~\text{for~every}~~A\in \mathcal{T}_f.
\end{align}

\begin{lem}\label{T=1}
Suppose $\mathcal{T}_g$ is not $t$-intersecting, and $|\mathcal{T}_f|=1$. Then the following statements hold.
\begin{enumerate}[{\rm(i)}]
  \item If $\mathcal{T}_f\nsubseteq \mathcal{T}_g$, then $|\mathcal{T}_g|\leq \theta_{2}\theta_{\ell-t+1}$.
  \item If $\mathcal{T}_f\subseteq \mathcal{T}_g$, then $|\mathcal{T}_g|\leq q\theta_{t+1}\theta_{\ell-t}+1$. Moreover, when $|\mathcal{T}_g|=q\theta_{t+1}\theta_{\ell-t}+1$, we have $(\mathcal{F},\mathcal{G})= (\mathcal{C}_2(k,t;M,L),~\mathcal{C}_1(\ell,t;M,L))$ for some $M\in{V\brack \ell+1}$ and $L\in{M\brack t+1}$.
\end{enumerate}
\end{lem}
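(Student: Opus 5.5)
The plan is to fix $\mathcal{T}_f=\{A\}$ and exploit two facts. First, every $B\in\mathcal{T}_g$ satisfies $\dim(B\cap A)\geq t$ by Fact~\ref{s-tt}. Second, there are members of $\mathcal{G}$ that avoid certain subspaces. Each such member $G$ forces $B$ into an $(\ell+1)$-space of the form $(B\cap A)+G$. The bounds then come from counting $(t+1)$-subspaces containing a fixed $t$-subspace inside an $(\ell+1)$-space, via Lemma~\ref{|N'|}.

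\emph{Part (i).} Since $A\notin\mathcal{T}_g$, $A$ is not a $t$-cover of $\mathcal{G}$. So there is $G_1\in\mathcal{G}$ with $\dim(A\cap G_1)\leq t-1$. Take $B\in\mathcal{T}_g$. Then $\dim(B\cap G_1)\geq t$, while $B\cap G_1\cap A\subseteq A\cap G_1$. Since $\dim(B\cap A)\geq t$ and $\dim B=t+1$, a dimension count gives three conclusions:
\begin{itemize}
\item $\dim(B\cap A)=t$, because $B\neq A$;
\item $B\cap A\cap G_1=A\cap G_1=:D$ has dimension exactly $t-1$;
\item $B\cap G_1\nsubseteq A$.
\end{itemize}
Hence $B\cap A$ is a $t$-subspace of $A$ containing $D$, which leaves $\theta_2$ choices. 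Moreover $B=(B\cap A)+\langle v\rangle$ with $v\in G_1$, so $B\subseteq (B\cap A)+G_1$, and $\dim((B\cap A)+G_1)=\ell+1$. For a fixed $t$-space $W=B\cap A$ there are at most $\theta_{\ell-t+1}$ such $B$. This gives $|\mathcal{T}_g|\leq\theta_2\theta_{\ell-t+1}$.

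\emph{Part (ii).} Now $A\in\mathcal{T}_g$. Since $\tau_t(\mathcal{G})=t+1$, not every member of $\mathcal{G}$ contains $A$, so pick $G\in\mathcal{G}$ with $A\nsubseteq G$. Then $W_G:=G\cap A$ has dimension exactly $t$.

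For $B\in\mathcal{T}_g\setminus\{A\}$ with $B\cap A\neq W_G$, the argument of (i) gives $B\subseteq A+G$, where $M_G:=A+G$ has dimension $\ell+1$. Note that $B_1\cap A$ and $B_2\cap A$ are distinct, since $\dim(B_1\cap B_2)=t-1$ and $B_1\cap B_2\subseteq A$ by \eqref{bbabb}.

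The key intermediate claim is that $M_G$ does not depend on the choice of $G\in\mathcal{G}\setminus\mathcal{G}_A$, and that every $B\in\mathcal{T}_g$ lies in this common space $M$. I would argue as follows:
\begin{itemize}
\item By the distinctness above, for each such $G$ at least one of $B_1,B_2$ avoids the exceptional trace $W_G$ and hence lies in $M_G$.
\item Comparing two such $G,G'$ through $B_1,B_2$, and using $A\subseteq B_1+B_2$, should show $M_G=M_{G'}$.
\item A $B$ with $B\cap A=W_G$ and $B\nsubseteq M_G$ would then have to satisfy $B\cap A=W_{G'}$ for every $G'$. I would rule this out by producing, via maximality of $\mathcal{G}$, some $G'\in\mathcal{G}$ with a different trace $W_{G'}$; for example, members of $\mathcal{C}_1$-type inside $M$ containing $B_1\cap A$ or $B_2\cap A$.
\end{itemize}
I expect this claim to be the main obstacle.

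Granting the claim, $\mathcal{T}_g\subseteq\{A\}\cup\{B\in{M\brack t+1}:\dim(B\cap A)=t\}$. Lemma~\ref{|N'|} with $r=t+1$, $e=\ell-t$ gives $q\theta_{t+1}\theta_{\ell-t}$ for the second set, so $|\mathcal{T}_g|\leq q\theta_{t+1}\theta_{\ell-t}+1$.

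\emph{Equality case.} In case of equality, $\mathcal{T}_g$ is exactly this set, with $L=A$. Then every $F\in\mathcal{F}$ $t$-intersects all of these $(t+1)$-spaces. Hence either $L\subseteq F$, or $\dim(F\cap L)=t$ and $\dim(F\cap M)\geq t+1$, so $\mathcal{F}\subseteq\mathcal{C}_2(k,t;M,L)$. Likewise, any $G\in\mathcal{G}$ not containing $L$ lies in $M$ by the claim, so $\mathcal{G}\subseteq\mathcal{C}_1(\ell,t;M,L)$. Since these two families are cross $t$-intersecting, maximality of $(\mathcal{F},\mathcal{G})$ forces equality.
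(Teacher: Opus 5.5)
Your part (i) is correct and is essentially the paper's argument. The paper counts pairs $(B\cap A,\,B\cap G_1)$ of $t$-spaces through $A\cap G_1$. You count $t$-spaces $W\supseteq A\cap G_1$ in $A$, and then $(t+1)$-spaces through $W$ in the $(\ell+1)$-space $W+G_1$. Both give $\theta_2\theta_{\ell-t+1}$.

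Part (ii), however, rests on a claim that is false outside the equality case. In general $A+G$ is \emph{not} independent of $G\in\mathcal{G}\setminus\mathcal{G}_A$, and $\mathcal{T}_g$ need not lie in any $(\ell+1)$-space. Your second bullet could not establish it in any case: $B_1+B_2$ has dimension only $t+3$, so $B_1+B_2\subseteq M_G\cap M_{G'}$ says nothing once $\ell>t+2$.

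Here is a counterexample. Take $t=1$, $\ell=4$, $k\geq 3$, $n$ large, $A=\langle a_1,a_2\rangle$, and $5$-spaces $M\neq M'$ with $N=M\cap M'$ a $4$-space containing $A$. Put
\[
\mathcal{G}=[A\rangle_4\cup\{G\in{M\brack 4}: G\cap A=\langle a_1\rangle\}\cup\{G\in{M'\brack 4}: G\cap A=\langle a_2\rangle\},
\]
and let $\mathcal{F}$ consist of all $k$-spaces meeting every member of $\mathcal{G}$. These are the $F$ with $F\cap A\neq 0$, $\dim(F\cap M')\geq 2$ if $a_2\notin F$, and $\dim(F\cap M)\geq 2$ if $a_1\notin F$. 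One checks the following:
\begin{enumerate}[(a)]
\item $(\mathcal{F},\mathcal{G})$ is maximal and $\tau_1(\mathcal{F})=\tau_1(\mathcal{G})=2$;
\item $\mathcal{T}_f=\{A\}\subseteq\mathcal{T}_g$;
\item $\langle a_1,y'\rangle,\langle a_2,y\rangle\in\mathcal{T}_g$ for $y'\in M'\setminus M$ and $y\in M\setminus M'$, and these two meet trivially.
\end{enumerate}
So all hypotheses of (ii) hold. But $A+G=M$ for $G$ of the second kind and $A+G=M'$ for the third, and $\sum_{B\in\mathcal{T}_g}B\supseteq M+M'$ has dimension $\ell+2$.

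The missing idea is to give up a common ambient space and count trace by trace, which your local observation already permits.
\begin{enumerate}[(1)]
\item Because $\tau_t(\mathcal{G})=t+1$, there are $G_1,G_2\in\mathcal{G}\setminus\mathcal{G}_A$ with $G_1\cap A\neq G_2\cap A$. Otherwise the common trace would lie in every member of $\mathcal{G}$. This, not maximality, is what should replace your third bullet.
\item For each $T\in{A\brack t}$ choose $G_T\in\{G_1,G_2\}$ with $G_T\cap A\neq T$. Every $B\in\mathcal{T}_g\setminus\{A\}$ with $B\cap A=T$ lies in the $(\ell+1)$-space $G_T+A$, which may depend on $T$.
\item So this slice has at most $q\theta_{\ell-t}$ members, and summing over the $\theta_{t+1}$ traces gives the bound.
\end{enumerate}

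A common space $M$ emerges only under equality. Pick $T_0\notin\{G_1\cap A,G_2\cap A\}$, which is possible since $\theta_{t+1}\geq 3$. Equality forces the $T_0$-slice to equal $\{B\in{G_i+A\brack t+1}: B\cap A=T_0\}$ for both $i$. Since $A$ together with such a slice spans $G_i+A$, this gives $G_1+A=G_2+A=:M$. Then each $G\in\mathcal{G}\setminus\mathcal{G}_A$ lies in $M$, by applying the same argument to a full slice with trace $\neq G\cap A$. With this in place, your equality-case deduction and the maximality step go through.
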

\proof (i) Suppose $\mathcal{T}_f=\{M_f\}\nsubseteq \mathcal{T}_g$. Then there exists $G'\in \mathcal{G}$ such that $\dim(G'\cap M_f)\leq t-1$. Let $B\in \mathcal{T}_g$. Since $\dim(G'\cap B)\geq t$ and $\dim(B\cap M_f)\geq t$, we have
\begin{align*}
t-1\geq\dim(G'\cap M_f)&\geq\dim((G'\cap B)\cap (M_f\cap B))
\geq t-1.
\end{align*}
Hence,
$\dim(G'\cap M_f)=t-1,~G'\cap M_f\subseteq B$, $\dim(G'\cap B)=\dim(M_f\cap B)=t$ and $G'\cap B\neq M_f\cap B$,
implying that $B=(B\cap G')+(B\cap M_f)\subseteq G'+M_f$. Therefore,
$$\mathcal{T}_g\subseteq \left\{B\in {G'+M_f\brack t+1} : G'\cap M_f\subseteq B,~\dim(B\cap M_f)=\dim(B\cap G')= t\right\},$$
and $|\mathcal{T}_g|\leq \theta_{2}\theta_{\ell-t+1}$ from Lemma~\ref{|N'|}, and (i) holds.

(ii) Since $\mathcal{T}_f=\{M_f\}\subseteq \mathcal{T}_g$, by Fact \ref{s-tt}, we have $\dim(B\cap M_f)=t$ for each $B\in \mathcal{T}_g\setminus \{M_f\}$, and thus
$$\mathcal{T}_g\setminus \{M_f\}=\bigcup_{T\in {M_f\brack t}}\big((\mathcal{T}_g)_T\setminus \{M_f\}\big).$$

Since $M_f\in \mathcal{T}_g$ and $\tau_t(\mathcal{G})=t+1$, there exist $G_1,G_2\in \mathcal{G}\setminus[M_f\rangle_{\ell}$ such that $G_1\cap M_f\neq G_2\cap M_f$.
For each $T\in {M_f\brack t}$, let $G_T\in \{G_1,G_2\}$ with $G_T\cap M_f\neq T$. Then $\dim(G_T\cap M_f)=t$ and $\dim(G_T\cap T)\leq t-1$. For each $B\in (\mathcal{T}_g)_T\setminus \{M_f\}$, we have $B\cap M_f=T$ and
\begin{align*}
t+1=\dim B&\geq \dim((B\cap G_T)+(B\cap M_f)) \geq 2t-\dim(G_T\cap T)\geq t+1,
\end{align*}
implying that $\dim(B\cap G_T)=t$ and  $B=(B\cap G_T)+(B\cap M_f)\subseteq G_T+M_f$.
Hence,
$$(\mathcal{T}_g)_T\setminus \{M_f\}\subseteq \left\{B\in {G_T+M_f\brack t+1} : \dim(B\cap G_T)=t,~B\cap M_f=T\right\},$$
and $|(\mathcal{T}_g)_T\setminus \{M_f\}|\leq N'(t,t;t+1,t;\ell+1,\ell-t)=q\theta_{\ell-t}$ from Lemma~\ref{|N'|}. Therefore,
$$|\mathcal{T}_g|=\sum_{T\in {M_f\brack t}} |(\mathcal{T}_g)_T\setminus \{M_f\}|+1\leq q\theta_{t+1}\theta_{\ell-t}+1.$$
This proves the former part of (ii).

Suppose $|\mathcal{T}_g|=q\theta_{t+1}\theta_{\ell-t}+1$.
Then for every $T\in {M_f\brack t}$, we have $|(\mathcal{T}_g)_T\setminus \{M_f\}|= q\theta_{\ell-t}$ and
$$M_f+\sum_{B\in (\mathcal{T}_g)_T\setminus \{M_f\}}B=G_T+M_f$$
for each $G_T\in \{G_1,G_2\}$ with $\dim(G_T\cap T)<t$.
Since $|{M_f\brack t}|\geq 3$, there exists $T_0\in {M_f\brack t}$ such that $\dim(G_1\cap T_0)<t$ and $\dim(G_2 \cap T_0)<t$.
Then all members of $\mathcal{T}_g$ are contained in either $G_1+M_f$ or $G_2+M_f$, and
$$G_1+M_f=M_f+\sum_{B\in (\mathcal{T}_g)_{T_0}\setminus \{M_f\}}B=G_2+M_f,$$
implying that $M_g=G_1+M_f=G_2+M_f$ is an $(\ell+1)$-subspace. Hence,
\begin{align}\label{Tg=TCx}
\mathcal{T}_g\setminus \{M_f\}=\bigcup_{T\in {M_f\brack t}}\left\{B\in {M_g\brack t+1} : B\cap M_f=T\right\}.
\end{align}

Set
\begin{align*}
\mathcal{F}'&=\left\{F\in {V\brack k} : \dim(F\cap M_f)=t, ~\dim(F\cap M_g)\geq t+1\right\}\bigcup [M_f\rangle_k,\\
\mathcal{G}'&=\left\{G\in {M_g\brack \ell} : \dim(G\cap M_f)=t\right\}\bigcup [M_f\rangle_\ell.
\end{align*}
We first show that $\mathcal{F}'$ and $\mathcal{G}'$ are cross $t$-intersecting. This is clear if either $F\in\mathcal{F}'$ or $G\in \mathcal{G}'$ contains $M_f$. For $F\in \mathcal{F}'\setminus [M_f\rangle_k$ and $G\in \mathcal{G}'\setminus [M_f\rangle_\ell$, we have
$$\dim(F\cap G)\geq \dim(F\cap G\cap M_g)\geq \dim(F\cap M_g)+\dim G-\dim M_g\geq t.$$
Thus $\mathcal{F}'$ and $\mathcal{G}'$ are cross $t$-intersecting.

By \eqref{abs}, we see that $[M_f\rangle_\ell\subseteq \mathcal{G}$ and $[M_f\rangle_k\subseteq \mathcal{F}$.
Let $G\in \mathcal{G}\setminus [M_f\rangle_\ell$. Then $\dim(G\cap M_f)=t$. There exists a $t$-subspace $T\subseteq M_f$ such that $G\cap M_f\neq T$, and so
$$M_f+\sum_{B\in (\mathcal{T}_g)_T\setminus \{M_f\}}B=G+M_f\subseteq M_g$$ from \eqref{Tg=TCx}, implying that $G\subseteq M_g$. Therefore, $G\in \mathcal{G}'$ and so $\mathcal{G}\subseteq \mathcal{G}'$. Let $F\in \mathcal{F}\setminus [M_f\rangle_k$. Then $\dim(F\cap M_f)=t$. Since $G_1\cap M_f\neq G_2\cap M_f$, we assume that $G_1\cap M_f\neq F\cap M_f$, and so $\dim(G_1\cap M_f\cap F)=t-1$. Using $M_g=G_1+M_f$, we obtain
\begin{align*}
\dim(F\cap M_g)\geq  \dim(F\cap G_1) +\dim(F\cap M_f) -\dim(G_1\cap M_f\cap F)\geq t+1,
\end{align*}
implying that $F\in \mathcal{F}'$. Therefore, $\mathcal{F}\subseteq \mathcal{F}'$. By maximality, we obtain $\mathcal{F}=\mathcal{F}'$ and $\mathcal{G}=\mathcal{G}'$. Taking $M=M_g$ and $L=M_f$ completes the proof.$\qed$

\begin{lem}\label{T=2-C}
Suppose that $\mathcal{T}_g$ is not $t$-intersecting, and $\mathcal{T}_f$ is a $t$-intersecting family with $|\mathcal{T}_f|\geq 2$. Then $|\mathcal{T}_f|=q+1$ and $|\mathcal{T}_g|\leq \max\left\{\theta_{\ell-t+1}+\theta_{t+2}-\theta_{2},~ \theta_{\ell-t+2}\right\}$.
\end{lem}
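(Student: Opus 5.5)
The plan is to work in the quotient by $C=B_1\cap B_2$, show that $\mathcal{T}_f$ is exactly a pencil of $q+1$ subspaces, and then sort the members of $\mathcal{T}_g$ by whether they contain the base of that pencil. We have $B_1,B_2\in\mathcal{T}_g$ with $\dim(B_1\cap B_2)=t-1$ as in \eqref{bbabb}; put $U=B_1+B_2$, so $\dim U=t+3$.

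\textbf{Upper bound on $|\mathcal{T}_f|$.} By \eqref{bbabb}, every $A\in\mathcal{T}_f$ satisfies $C\subseteq A\subseteq U$. By Fact~\ref{s-tt}, $\dim(A\cap B_i)\geq t$ for $i=1,2$.
\begin{enumerate}[(a)]
\item $A\neq B_i$: for instance $A=B_1$ would give $\dim(A\cap B_2)=t-1$.
\item Hence $\dim(A\cap B_i)=t$ for both $i$.
\end{enumerate}
Now pass to the $4$-dimensional quotient $U/C$, viewed as a projective $3$-space. There $B_1/C$ and $B_2/C$ are skew lines. Each $A/C$ is a line meeting both of them. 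Since $\mathcal{T}_f$ is $t$-intersecting, any two of these lines meet. Pairwise-meeting lines are either concurrent or coplanar.
\begin{enumerate}[(a)]
\item If they are coplanar in a plane $\pi$, then $\pi$ meets each $B_i/C$ in a single point. The only transversal in $\pi$ is the line joining these two points, which contradicts $|\mathcal{T}_f|\geq 2$.
\item So they are concurrent at a point $P$. If $P$ lay on neither $B_i/C$, there would be a unique transversal through $P$, again a contradiction. Hence $P$ lies on, say, $B_1/C$, and all the lines lie in the plane $P+B_2/C$.
\end{enumerate}
Lifting back, there are a $t$-subspace $T$ with $C\subseteq T\subseteq B_1$ and a $(t+2)$-subspace $W$ with $T\subseteq W$ such that $\mathcal{T}_f\subseteq[T,W]_{t+1}$. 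In particular $|\mathcal{T}_f|\leq q+1$.

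\textbf{Lower bound on $|\mathcal{T}_f|$.} Pick distinct $A_1,A_2\in\mathcal{T}_f$; then $A_1\cap A_2=T$ and $A_1+A_2=W$. Let $F\in\mathcal{F}$. If $T\subseteq F$, then $F$ meets every $A\in[T,W]_{t+1}$ in dimension at least $t$. Otherwise $F\cap A_1$ and $F\cap A_2$ are distinct $t$-subspaces, so $\dim(F\cap W)\geq t+1$. Since $A$ is a hyperplane of $W$, this gives $\dim(F\cap A)\geq t$. Thus every member of $[T,W]_{t+1}$ is a $t$-cover of $\mathcal{F}$, so $\mathcal{T}_f=[T,W]_{t+1}$ and $|\mathcal{T}_f|=q+1$.

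\textbf{Bound on $|\mathcal{T}_g|$.} Take $B\in\mathcal{T}_g$. By Fact~\ref{s-tt}, $B$ meets each member of the pencil $[T,W]_{t+1}$ in dimension at least $t$.
\begin{enumerate}[(a)]
\item If $T\not\subseteq B$, the argument above gives $\dim(B\cap W)\geq t+1$, so $B\subseteq W$ and $B\notin[T,W]_{t+1}$. There are at most $\theta_{t+2}-\theta_2$ such $B$.
\item If $T\subseteq B$: since $\tau_t(\mathcal{G})=t+1$, there is $G\in\mathcal{G}$ with $T\not\subseteq G$. Exactly as in the proof of Lemma~\ref{m}(i), $B=(B\cap G)+T\subseteq G+T$, and then $\dim(G\cap T)=t-1$ and $\dim(G+T)=\ell+1$. (If $\dim(G\cap T)<t-1$, no such $B$ exists at all.) Hence $B\in[T,G+T]_{t+1}$, which contains $\theta_{\ell-t+1}$ subspaces.
\end{enumerate}
Adding the two cases gives $|\mathcal{T}_g|\leq\theta_{\ell-t+1}+\theta_{t+2}-\theta_2$, which is at most the stated maximum.

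The main obstacle is the classification of $\mathcal{T}_f$, namely excluding configurations other than a pencil. The quotient-geometry argument on transversals of two skew lines in $U/C$ should handle this; I would still double-check the degenerate possibilities ($T=C+$ a point of $B_1/C$ versus of $B_2/C$) and the case analysis in the lift.
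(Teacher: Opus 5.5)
Your proof is correct in substance, and it follows a genuinely different route from the paper's. It needs one small repair, described at the end.

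\textbf{The bound $|\mathcal{T}_f|=q+1$.} The paper stays inside $V$. It applies Fact~\ref{DRG} to $\mathcal{T}_f\cup\{B_i\}$ to rule out $\tau_t(\mathcal{T}_f)=t+1$, since otherwise both $B_i$ would lie in ${A_1+A_2\brack t+1}$. Hence $\mathcal{T}_f\subseteq[A_1\cap A_2,M_f]_{t+1}$. It then excludes $\dim M_f>t+2$: a third member outside $A_1+A_2$ would force $A_1\cap A_2\subseteq B$ for every $B\in\mathcal{T}_g$.

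You instead pass to $U/C$ and directly classify pairwise-meeting transversals of two skew lines in projective $3$-space. This also yields extra structure the paper never needs, namely $C\subseteq T\subseteq B_1$ and $W=T+B_2$. The final step, that every member of $[T,W]_{t+1}$ is a $t$-cover of $\mathcal{F}$, is the same in both proofs.

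\textbf{The bound on $|\mathcal{T}_g|$.} Here your route is actually sharper. The paper derives $B=(G\cap B)+T$ but then only uses $B\subseteq G+M_f$. This leads to $\dim M_g\le\ell+2$ and a further argument on $\mathcal{T}_g\cap{M_f\brack t+1}$, which is where the $\theta_{\ell-t+2}$ term comes from. Your observation is that $B\subseteq G+T$ with $\dim(G+T)=\ell+1$. Combined with your direct argument that members not containing $T$ lie in ${W\brack t+1}\setminus[T,W]_{t+1}$, this gives $|\mathcal{T}_g|\le\theta_{\ell-t+1}+\theta_{t+2}-\theta_2$ unconditionally. Consequently, in Case~3 of the main proof, the branch $|\mathcal{T}_i|\le\theta_{k_i-t+2}$ becomes unnecessary. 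Lemma~\ref{<hh}(iii) is still used, but only as a step toward (iv).

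\textbf{The repair in your case (a).} ``Coplanar'' and ``concurrent'' are not exclusive. Your claim that $\pi$ meets each $B_i/C$ in a single point fails when $\pi\supseteq B_j/C$. This is exactly the true configuration: the pencil through $P$ lies in the plane $P+B_2/C$, which contains $B_2/C$. So as written, case (a) would rule out the actual answer.

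Fix it by taking case (a) to be ``coplanar but not concurrent.'' If $\pi\supseteq B_j/C$, then $\pi\cap B_i/C$ is a single point, because $B_1/C+B_2/C$ is the whole $4$-space. Every line of $\pi$ meeting $B_i/C$ must pass through that point, so the lines would be concurrent, a contradiction. Hence $\pi$ meets each $B_i/C$ in exactly one point, and your unique-transversal argument goes through.
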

\proof Let $B_1$ and $B_2$ be distinct elements in $\mathcal{T}_g$ with $\dim(B_1\cap B_2)\leq t-1$. It follows from Fact \ref{s-tt} that $B_1,B_2\notin \mathcal{T}_f$. From \eqref{bbabb}, we know that $\dim(B_1+ B_2)=t+3$ and $M_f=\sum_{A\in \mathcal{T}_f}A\subseteq B_1+ B_2\subseteq M_g$.

Since $\mathcal{T}_f$ is $t$-intersecting with $|\mathcal{T}_f|\geq 2$, there exist distinct $A_1,A_2\in\mathcal{T}_f$ such that $\dim(A_1+A_2)=t+2$. Clearly, $\tau_t(\mathcal{T}_f)\in \{t,t+1\}$. Let $\mathcal{U}_i=\mathcal{T}_f\cup\{B_i\}$ for $i\in \{1,2\}$. By Fact \ref{s-tt}, $\mathcal{U}_i=\mathcal{T}_f\cup\{B_i\}$ is $t$-intersecting, and $\tau_t(\mathcal{T}_f)\leq \tau_t(\mathcal{U}_i)\leq t+1$.
If $\tau_t(\mathcal{T}_f)=t+1$, then $B_i\in \mathcal{U}_i\subseteq {A_1+A_2\brack t+1}$ from Fact \ref{DRG}, contradicting $\dim(B_1\cap B_2)\leq t-1$.
Hence, $\tau_t(\mathcal{T}_f)=t$, and so $\mathcal{T}_f\subseteq [A_1\cap A_2,M_f]_{t+1}$.
Suppose $\dim M_f> t+2$. Then there exists $A_3\in \mathcal{T}_f\setminus \{A_1,A_2\}$ such that $A_3\nsubseteq A_1+A_2$. Since $\mathcal{T}_f$ is $t$-intersecting, we know $A_3\cap A_1=A_3\cap A_2=A_1\cap A_2$.
For each $B\in \mathcal{T}_g$ and $j\in \{1,2,3\}$, it follows from $\dim(B\cap A_j)\geq t$ that $A_1\cap A_2\subseteq B$, contradicting that $\mathcal{T}_g$ is not $t$-intersecting.
Therefore, $M_f=A_1+A_2$. Let $T=A_1\cap A_2$. Then $\mathcal{T}_f\subseteq [T,M_f]_{t+1}$.

Let $F\in \mathcal{F}$ and $A\in [T,M_f]_{t+1}$. It is clear that $\dim(F\cap M_f)\geq t$. If $\dim(F\cap M_f)\geq t+1$, then $\dim(F\cap A)\geq t$ is immediate. Now we assume that $\dim(F\cap M_f)=t$. Since $\dim(F\cap A_1)\geq t$ and $\dim(F\cap A_2)\geq t$, we obtain $F\cap M_f=F\cap A_1=F\cap A_2\subseteq A_1\cap A_2\subseteq A$, implying that $\dim(F\cap A)\geq t$. Then $A\in \mathcal{T}_f$. Therefore, $\mathcal{T}_f=[T,M_f]_{t+1}$ and consequently $|\mathcal{T}_f|=q+1$. Then the former part holds.

For each $B\in \mathcal{T}_g$, since $\dim(B\cap A_1)\geq t$ and $\dim(B\cap A_2)\geq t$, it follows from Fact \ref{DRG} that either $T\subseteq B$ or $B\in {M_f\brack t+1}$. Then
\begin{align*}
\mathcal{T}_g\subseteq\left([T,M_g]_{t+1}\setminus \mathcal{T}_f\right)\cup \left(\mathcal{T}_g\cap{M_f\brack t+1}\right).
\end{align*}
Since $[T,M_g]_{t+1}\cap {M_f\brack t+1}=\mathcal{T}_f$, by Lemma~\ref{|N'|}, we obtain
\begin{align}\label{Tg-size}
|\mathcal{T}_g|\leq  \theta_{\dim M_g-t} -\theta_{2} +\left|\mathcal{T}_g\cap {M_f\brack t+1}\right|.
\end{align}
Furthermore, as $\mathcal{T}_g$ is not $t$-intersecting, both $\mathcal{T}_g\cap\left([T,M_g]_{t+1}\setminus \mathcal{T}_f\right)$ and ${M_f\brack t+1}\cap\mathcal{T}_g$ are non-empty.

By $\tau_t(\mathcal{G})=t+1$, we have $\mathcal{G}\setminus \mathcal{G}_T\neq \emptyset$.
Let $B\in \mathcal{T}_g\cap\left([T,M_g]_{t+1}\setminus \mathcal{T}_f\right)$ and $G\in \mathcal{G}\setminus \mathcal{G}_T$. Then $\dim(G\cap B)\geq t$, and so $\dim(G\cap T)=t-1$.
Moreover, $\dim B\geq \dim((G\cap B)+T)\geq t+1$, implying that $B=(G\cap B)+T\subseteq G +M_f$. Therefore,
\begin{align*}
M_g=M_f+\sum_{B\in\mathcal{T}_g\cap\left([T,M_g]_{t+1}\setminus \mathcal{T}_f\right)}B\subseteq G +M_f~\text{for~all}~G\in \mathcal{G}\setminus \mathcal{G}_T,
\end{align*}
and so $\dim M_g\leq \ell+t+2-\dim(G\cap M_f)$.

Suppose $\dim(G\cap M_f)=t+1$
for some $G\in \mathcal{G}\setminus \mathcal{G}_T$. Then $\dim M_g\leq \ell+1$, and so
$|\mathcal{T}_g|
\leq \theta_{\ell-t+1}+\theta_{t+2}-\theta_{2}$
by \eqref{Tg-size}.

Suppose $\dim(G\cap M_f)=t$
for all $G\in \mathcal{G}\setminus \mathcal{G}_T$. We have $\dim M_g\leq \ell+2$. If $\left|\mathcal{T}_g\cap {M_f\brack t+1}\right|=1$, then $|\mathcal{T}_g|\leq\theta_{\ell-t+2}-q<\theta_{\ell-t+2}$ by \eqref{Tg-size}.
Now we assume $\left|\mathcal{T}_g\cap {M_f\brack t+1}\right|\geq 2$.
Let $G\in \mathcal{G}\setminus \mathcal{G}_T$, and $C_1, C_2\in \mathcal{T}_g\cap {M_f\brack t+1}$ be distinct subspaces. For $i\in \{1,2\}$, since $t\leq\dim(G\cap C_i)\leq\dim(G\cap M_f)=t$, we have $G\cap M_f= G\cap C_i\subseteq C_i$, and so $G\cap M_f=C_1\cap C_2$.
Then
\begin{align*}
\mathcal{T}_g\cap {M_f\brack t+1}\subseteq [C_1\cap C_2,M_f]_{t+1}.
\end{align*}
Hence, $|\mathcal{T}_g|\leq \theta_{\ell-t+2}$ by \eqref{Tg-size}.
Therefore
$|\mathcal{T}_g|\leq \max\left\{\theta_{\ell-t+1} +\theta_{t+2}-\theta_{2},~\theta_{\ell-t+2}\right\}$. This completes the proof. $\qed$

In what follows, for a subspace $X$ of $V$, we denote by $\mathcal{V}_X$ the subspaces of $V$ containing
$X$, and by $F/X$ the quotient of $F\in \mathcal{V}_X$. Recall that the members of $\mathcal{V}_X$ are in one-to-one
correspondence with the subspaces of $V/X$. Moreover, for $F,F' \in\mathcal{V}_X$, it is evident that
$(F \cap F')/X=(F/X)\cap (F'/X)$, and $\dim((F \cap F')/X) =\dim(F \cap F')-\dim(X)$.

\begin{lem}\label{both-non-inter}
Suppose neither $\mathcal{T}_f$ nor $\mathcal{T}_g$ is $t$-intersecting. If $|\mathcal{T}_f|=2$, then $|\mathcal{T}_g|\leq (q+1)^2$. If $|\mathcal{T}_f|\geq 3$, then $|\mathcal{T}_g|\leq 2q+1$.
\end{lem}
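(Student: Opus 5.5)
The plan is to reduce everything to counting lines in a projective $3$-space. Since neither $\mathcal{T}_f$ nor $\mathcal{T}_g$ is $t$-intersecting, we can apply \eqref{bbabb} in both directions (the argument leading to \eqref{bbabb} is symmetric in $f$ and $g$, using Fact~\ref{s-tt}). This gives $B_1,B_2\in\mathcal{T}_g$ with $\dim(B_1\cap B_2)=t-1$ and $B_1\cap B_2\subseteq A\subseteq B_1+B_2$ for all $A\in\mathcal{T}_f$. It also gives $A_1,A_2\in\mathcal{T}_f$ with $\dim(A_1\cap A_2)=t-1$ and $A_1\cap A_2\subseteq B\subseteq A_1+A_2$ for all $B\in\mathcal{T}_g$.

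Next we identify the two configurations. Since $A_1,A_2\supseteq B_1\cap B_2$ and both intersections have dimension $t-1$, we get $D:=A_1\cap A_2=B_1\cap B_2$. Since $A_1+A_2\subseteq B_1+B_2$ and both sums have dimension $t+3$, we get $W:=A_1+A_2=B_1+B_2$. So every member of $\mathcal{T}_f\cup\mathcal{T}_g$ lies in $[D,W]_{t+1}$. We pass to the quotient $W/D$, which has dimension $4$, using the quotient correspondence just introduced. Each member of $\mathcal{T}_f\cup\mathcal{T}_g$ becomes a $2$-subspace, i.e.\ a line of $\mathrm{PG}(3,q)$. The cross $t$-intersecting condition $\dim(A\cap B)\geq t$ becomes $\dim((A/D)\cap(B/D))\geq 1$, i.e.\ the two lines meet. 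Thus $\mathcal{T}_g/D$ is a set of lines each meeting every line of $\mathcal{T}_f/D$, and $\ell_1=A_1/D$, $\ell_2=A_2/D$ are skew.

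If $|\mathcal{T}_f|=2$, then every line of $\mathcal{T}_g/D$ is a transversal of the skew lines $\ell_1,\ell_2$. Such a line is determined by a point of $\ell_1$ and a point of $\ell_2$, so there are exactly $(q+1)^2$ of them, giving $|\mathcal{T}_g|\leq (q+1)^2$.

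If $|\mathcal{T}_f|\geq 3$, pick a third line $\ell_3\in\mathcal{T}_f/D$ and bound the number of lines meeting $\ell_1,\ell_2,\ell_3$ by case analysis.
\begin{enumerate}[(a)]
  \item If $\ell_3$ is skew to both $\ell_1$ and $\ell_2$, the common transversals of three pairwise skew lines form the opposite regulus, so there are at most $q+1$ of them.
  \item If $\ell_3$ meets $\ell_1$ at a point $p$ but is skew to $\ell_2$, a common transversal either passes through $p$ and meets $\ell_2$ ($q+1$ lines), or lies in the plane $\langle \ell_1,\ell_3\rangle$ and passes through the point where this plane meets $\ell_2$ ($q+1$ lines). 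One line is shared, so the total is $2q+1$.
  \item If $\ell_3$ meets both $\ell_1$ and $\ell_2$, at $p_1$ and $p_2$, the common transversals are the lines through $p_1$ meeting $\ell_2$ together with the lines through $p_2$ meeting $\ell_1$, with $\ell_3$ counted twice. This again gives $2q+1$.
\end{enumerate}
In every case $|\mathcal{T}_g|\leq 2q+1$.

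I expect the main obstacle to be the first step: checking carefully that both applications of \eqref{bbabb} are legitimate, and that they force the same $D$ and $W$ for both families. Once everything sits inside a single $4$-dimensional quotient, the argument is classical line geometry in $\mathrm{PG}(3,q)$. The only remaining point is case (c), where one must make sure no transversal lying in the planes $\langle\ell_1,\ell_3\rangle$ or $\langle\ell_2,\ell_3\rangle$ is missed; the plane-versus-point dichotomy for two intersecting lines handles this.
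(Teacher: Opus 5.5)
Your proof is correct and follows the paper's route. Like the paper, you apply \eqref{bbabb} in both directions to get $A_1\cap A_2=B_1\cap B_2$ and $A_1+A_2=B_1+B_2$, pass to the $4$-dimensional quotient, bound by the $(q+1)^2$ transversals of two skew lines, and for $|\mathcal{T}_f|\geq 3$ count the lines that also meet a third member of $\mathcal{T}_f$. The only difference is the final count: you do it synthetically in $\mathrm{PG}(3,q)$ according to how $\ell_3$ meets $\ell_1,\ell_2$, whereas the paper uses coordinates adapted to $A_1,A_2,B_1,B_2$ and a determinant condition. Your exact count $q+1$ in the pairwise-skew case also shows that the paper's value $(q+2)(q-1)^2$ there is a harmless overcount; the true value is $(q+1)(q-1)^2$.
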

\proof
Since neither $\mathcal{T}_f$ nor $\mathcal{T}_g$ is $t$-intersecting, we have $|\mathcal{T}_f|\geq 2$ and $|\mathcal{T}_g|\geq 2$.
Let $A_1$ and $A_2$ be distinct elements in $\mathcal{T}_f$ with $\dim(A_1\cap A_2)\leq t-1$, and $B_1$ and $B_2$ be distinct elements in $\mathcal{T}_g$ with $\dim(B_1\cap B_2)\leq t-1$.
By \eqref{bbabb}, we have $\dim(A_1\cap A_2)=\dim(B_1\cap B_2)=t-1$, and $A_1\cap A_2\subseteq B\subseteq A_1+ A_2$ and $B_1\cap B_2\subseteq A \subseteq B_1+B_2$ for each $A\in \mathcal{T}_f$ and $B\in \mathcal{T}_g$, implying that $A_1\cap A_2=B_1\cap B_2$ and $M_g=A_1+ A_2=B_1+ B_2=M_f\in {V\brack t+3}$.

Let $\bar{M}=M_f/T=M_g/T$, $T=A_1\cap A_2=B_1\cap B_2$, $\mathcal{T}'_f=\left\{A/T : A\in \mathcal{T}_f\right\}$ and $\mathcal{T}'_g=\left\{B/T : B\in \mathcal{T}_g\right\}$. Then $\dim \bar{M}=4$, $|\mathcal{T}_f|=|\mathcal{T}'_f|$ and $|\mathcal{T}_g| = |\mathcal{T}'_g|$.
Set
\begin{align*}
\mathcal{A}&=\left\{\bar{A}\in {\bar{M}\brack 2} : \dim(\bar{A}\cap (B_1/T))\geq 1, \dim(\bar{A}\cap (B_2/T))\geq 1\right\},\\
\mathcal{B}&=\left\{\bar{B}\in {\bar{M}\brack 2} : \dim(\bar{B}\cap (A_1/T))\geq 1, \dim(\bar{B}\cap (A_2/T))\geq 1\right\}.
\end{align*}
Then $\mathcal{T}'_f \subseteq \mathcal{A}$ and $\mathcal{T}'_g \subseteq \mathcal{B}$. Moreover, $\mathcal{T}'_f$ and $\mathcal{T}'_g$ are cross $1$-intersecting, and neither of them is $1$-intersecting.

Suppose $|\mathcal{T}'_f|=2$. Then $|\mathcal{T}'_g|\leq|\mathcal{B}|=(q+1)^2$ from Lemma~\ref{|N'|}, and the former part of the lemma follows.

Suppose $|\mathcal{T}'_f|\geq 3$.
Let $\bar{C}\in \mathcal{T}'_f\setminus \{A_1/T,A_2/T\}$. Choose a basis $\bar{e}_1$, $\bar{e}_2$, $\bar{e}_3$, $\bar{e}_4$ of $\bar{M}$ such that
$$A_1/T=\langle\bar{e}_1,\bar{e}_2\rangle,~ A_2/T=\langle\bar{e}_3,\bar{e}_4\rangle,~ B_1/T=\langle\bar{e}_1,\bar{e}_3\rangle,~ B_2/T=\langle\bar{e}_2,\bar{e}_4\rangle.$$
By Fact \ref{s-tt}, we assume $\bar{C}=\langle\bar{u}_1,\bar{u}_2\rangle$ such that $\bar{u}_1=\alpha_1\bar{e}_1+\alpha_3\bar{e}_3,~ \bar{u}_2=\alpha_2\bar{e}_2+\alpha_4\bar{e}_4$, where $\alpha_1, \alpha_2, \alpha_3, \alpha_4\in \mathbb{F}_q$ and $(0,0)\notin \{(\alpha_1,\alpha_2), (\alpha_1,\alpha_3),(\alpha_2,\alpha_4) ,(\alpha_3,\alpha_4)\}$. Let $\mathcal{B}(\bar{C})=\left\{\bar{D}\in \mathcal{B} : \dim(\bar{D}\cap \bar{C})\neq 0\right\}$ and  $\bar{D}=\langle\bar{v}_1,\bar{v}_2\rangle\in \mathcal{B}(\bar{C})$. We assume $\bar{v}_1=\beta_{1}\bar{e}_1+\beta_{2}\bar{e}_2,~ \bar{v}_2=\beta_{3}\bar{e}_3+\beta_{4}\bar{e}_4$, where
$\beta_{1}, \beta_{2}, \beta_{3}, \beta_{4}\in \mathbb{F}_q$ with $(0,0)\notin\{(\beta_{1},\beta_{2}), (\beta_{3},\beta_{4})\}$. Since $\dim(\bar{D}\cap \bar{C})\neq 0$, the vectors $\bar{u}_1,\bar{u}_2,\bar{v}_1,\bar{v}_2$ are linearly dependent, and so
\begin{align*}
\begin{vmatrix}
\alpha_1 & 0 & \beta_{1} & 0\\
0 & \alpha_2 & \beta_{2} & 0\\
\alpha_3 & 0 & 0 & \beta_{3}\\
0 & \alpha_4 & 0 & \beta_{4}
\end{vmatrix}
=\alpha_1\alpha_4\beta_{2}\beta_{3} -\alpha_2\alpha_3\beta_{1}\beta_{4} =0.
\end{align*}

Since $(\alpha_1,\alpha_3)\neq (0,0) $ and $(\alpha_2,\alpha_4)\neq (0,0)$, we assume $\alpha_3\neq 0$ and $\alpha_4\neq 0$. (The remaining cases can be handled similarly.) After suitable rescaling, we may further normalize $\alpha_3=\alpha_4=1$.
Let $$N(\alpha_1, \alpha_2)=\left |\left\{(\beta_{1},\beta_{2}, \beta_{3},\beta_{4})\in \mathbb{F}_q^4 : \alpha_1\beta_{2}\beta_{3} =\alpha_2\beta_{1}\beta_{4},~(0,0)\notin\{(\beta_{1},\beta_{2}), (\beta_{3},\beta_{4})\}\right\}\right |.$$
Then $|\mathcal{B}(\bar{C})|\leq (q-1)^{-2}N(\alpha_1, \alpha_2)$.

By $(\alpha_1,\alpha_2)\neq (0,0)$, without loss of generality, we assume $\alpha_2\neq 0$.
If $\alpha_1=0$, then
$$N(\alpha_1, \alpha_2)=\left |\left\{(\beta_{1},\beta_{2}, \beta_{3},\beta_{4})\in \mathbb{F}_q^4 : \beta_{1}\beta_{4}=0,~(0,0)\notin\{(\beta_{1},\beta_{2}), (\beta_{3},\beta_{4})\}\right\}\right |=(2q+1)(q-1)^2.$$
If $\alpha_1\neq 0$, then $N(\alpha_1, \alpha_2)$ equals
$$\left |\left\{(\beta_{1},\beta_{2}, \beta_{3},\beta_{4})\in \mathbb{F}_q^4 : (\alpha_2^{-1}\alpha_1)\beta_{2}\beta_{3}=\beta_{1}\beta_{4}, ~(0,0)\notin\{(\beta_{1},\beta_{2}), (\beta_{3},\beta_{4})\}\right\}\right |=(q+2)(q-1)^2.$$
Hence, $|\mathcal{T}_g|=|\mathcal{T}'_g| \leq|\mathcal{B}(\bar{C})|\leq 2q+1$, and the latter part of the lemma follows.$\qed$

\section{Proof of the main theorem}\label{2.2}

Before proving Theorem \ref{main-1}, we will give an upper bound for the product $|\mathcal{F}||\mathcal{G}|$ when
$(\tau_t(\mathcal{F}),\tau_t(\mathcal{G}))\neq (t+1,t+1)$.

\begin{lem}\label{cfcg-t+2}
Let $n$, $k$, $\ell$ and $t$ be positive integers satisfying $\min\{k, \ell\}\geq t+2$ and $n\geq 2\cdot\max\{k,\ell\}+k+\ell-t+5$.
Suppose $\mathcal{F} \subseteq {V\brack k}$ and $\mathcal{G} \subseteq {V\brack \ell}$ are maximal cross $t$-intersecting families with
$(\tau_t(\mathcal{F}),\tau_t(\mathcal{G}))\neq (t+1,t+1)$. Then
$|\mathcal{F}||\mathcal{G}|<a(k,t)a(\ell,t).$
\end{lem}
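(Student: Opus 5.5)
By symmetry I may assume $\tau_t(\mathcal{F})\geq t+2$, or else $\tau_t(\mathcal{F})=t+1$ and $\tau_t(\mathcal{G})\ge t+2$. Both families are nonempty and $t\le\tau_t(\mathcal{F})\le\ell$, $t\le\tau_t(\mathcal{G})\le k$. The plan is to bound each family separately by counting through a minimum $t$-cover. I will show that the family with large covering number is negligibly small compared with $a(\cdot,t)$, and that the other family is at most roughly ${n-t\brack \cdot-t}$. The product of these two estimates should then fall well below $a(k,t)a(\ell,t)$. By Lemma~\ref{q<q}, the target satisfies $a(k,t)a(\ell,t)\approx \theta_{t+2}^2 q^{k+\ell-2t-2}{n-t-2\brack k-t-1}{n-t-2\brack \ell-t-1}$, which has order $q^{(n-k)(k-t-1)+(n-\ell)(\ell-t-1)}$ up to constant factors in $q$.

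\textbf{Step 1: the family with a small cover.} I would use the standard covering argument from the set case (as in \cite{Cao-Lu-Lv-Wang-2023}). If $\tau_t(\mathcal{G})=s$ and $T$ is a $t$-cover of $\mathcal{G}$ of dimension $s$, then every $G\in\mathcal{G}$ contains some $t$-subspace of $T$. Hence $|\mathcal{G}|\le {s\brack t}{n-t\brack \ell-t}$. This crude bound suffices for the family with covering number $t$ or $t+1$; for example, if $\tau_t(\mathcal{G})=t$ then $|\mathcal{G}|\le{n-t\brack \ell-t}$.

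\textbf{Step 2: the family with a large cover.} If $\tau_t(\mathcal{F})=s\ge t+2$, I would iterate the argument. Fix $G\in\mathcal{G}$; each $F$ meets $G$ in at least $t$ dimensions, so $F$ contains some $t$-subspace $S_0\subseteq G$. Since $S_0$ is not a $t$-cover of $\mathcal{F}$, there is $F'\in\mathcal{F}$ with $\dim(F'\cap S_0)<t$. The cross condition forces the members of $\mathcal{F}$ containing $S_0$ to meet every $G'$, and I would use the members of $\mathcal{G}$ avoiding $S_0$ to force one extra dimension inside a fixed $(\ell+t)$-space. Repeating this (the usual branching lemma: if $S$ with $\dim S<\tau$ is contained in $F$, then some fixed member of the other family yields at most ${\ell+\dim S\brack 1}$ choices for a one-larger subspace) gives
\[
|\mathcal{F}|\le {\ell\brack t}\theta_{\ell+t}^{\,2}{n-t-2\brack k-t-2}\quad\text{when } \tau_t(\mathcal{F})\ge t+2 .
\]
The key point is that this is a factor of order $q^{-(n-k)}$ smaller than ${n-t\brack k-t}$. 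In the case where both covering numbers are at least $t+2$, the same bound applies to both families and the product is tiny.

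\textbf{Step 3: comparison.} In the main case, $\tau_t(\mathcal{F})\ge t+2$ and $\tau_t(\mathcal{G})\in\{t,t+1\}$, so Steps 1--2 give
\[
|\mathcal{F}||\mathcal{G}|\le \theta_{t+1}{\ell\brack t}\theta_{\ell+t}^2{n-t\brack \ell-t}{n-t-2\brack k-t-2}.
\]
Dividing by $a(k,t)a(\ell,t)$ and applying Lemma~\ref{q<q}(ii),(iii), the $n$-dependent part of the ratio is roughly $q^{2(n-\ell)}\cdot q^{-(n-k)}\cdot q^{-(n-\ell)}\cdot q^{-\ldots}$. More precisely, ${n-t\brack\ell-t}/{n-t-2\brack \ell-t-1}\approx q^{2(n-\ell)+\ell-t}$ and ${n-t-2\brack k-t-2}/{n-t-2\brack k-t-1}\approx q^{-(n-k)}$. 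The net ratio is therefore about $q^{n-2\ell+k+c}/(\ldots)$, which still grows with $n$. So the crude bound in Step 1 is not enough.

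The main obstacle is to sharpen Step 1 in the mixed case. If $\tau_t(\mathcal{G})=t$, i.e.\ $\mathcal{G}\subseteq[T\rangle_\ell$ for a $t$-subspace $T$, then $\tau_t(\mathcal{F})\ge t+1$ means some $F$ misses $T$, and every $G$ must meet that $F$ in a further direction. This reduces $|\mathcal{G}|$ to about ${k\brack 1}{n-t-1\brack \ell-t-1}$, which is of the same order as $a(\ell,t)$. Likewise, if $\tau_t(\mathcal{G})=t+1$, Lemma~\ref{not-contain}-type reasoning (each $G$ contains a $(t+1)$-subspace meeting the cover) gives $|\mathcal{G}|\le c\,{n-t-1\brack \ell-t-1}$ with $c$ depending only on $k,\ell,t,q$. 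With this refinement, the ratio becomes of order $q^{-(n-k)}$ times a polynomial in $q^{k},q^{\ell}$, and it is below $1$ once $n\ge 2\max\{k,\ell\}+k+\ell-t+5$. I expect this final verification to require a careful inequality of the type deferred to Section~\ref{3}.
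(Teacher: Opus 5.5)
There is a genuine gap, and it is in Step~2: you attach the decay to the wrong family. To shrink $\mathcal{F}_S=\{F\in\mathcal{F}: S\subseteq F\}$ by one level, you need some $G'\in\mathcal{G}$ with $\dim(G'\cap S)<t$. Then every $F\in\mathcal{F}_S$ contains $S+(F\cap G')$, which has dimension larger than $\dim S$ and lies in the fixed space $S+G'$. Such a $G'$ is guaranteed when $S$ is not a $t$-cover of $\mathcal{G}$, that is, when $\dim S<\tau_t(\mathcal{G})$. The member $F'\in\mathcal{F}$ with $\dim(F'\cap S_0)<t$ that you extract from $\tau_t(\mathcal{F})\ge t+2$ says nothing about $|\mathcal{F}_{S_0}|$. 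So your displayed bound on $|\mathcal{F}|$ really needs $\tau_t(\mathcal{G})\ge t+2$.

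It is false in your main case. Take $\mathcal{G}=\{G_0\}$ and $\mathcal{F}=\{F\in{V\brack k}:\dim(F\cap G_0)\ge t\}$. For $n$ in the given range this is a maximal pair with $\tau_t(\mathcal{F})=\ell\ge t+2$, yet $|\mathcal{F}|\ge{n-t\brack k-t}$. When $\tau_t(\mathcal{G})=t+1$, only one branching step is available for $\mathcal{F}$. Hence $|\mathcal{F}|$ can only be bounded by a constant times ${n-t-1\brack k-t-1}$, and the saving must come from the other family.

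This is exactly how the paper proceeds, via \eqref{s-upper-F1}--\eqref{s-upper-F2}, taken from \cite[Lemma~2.5]{Cao-Lu-Lv-Wang-2023}. If $\tau_t(\mathcal{F})\ge t+2$ and $\tau_t(\mathcal{G})=t+1$, then $|\mathcal{F}|\le\theta_{\ell-t+1}{\tau_t(\mathcal{F})\brack t}{n-t-1\brack k-t-1}$, while $|\mathcal{G}|\le\theta_k^{\tau_t(\mathcal{F})-t-2}\theta_{k-t+1}^2\theta_{t+1}{n-\tau_t(\mathcal{F})\brack \ell-\tau_t(\mathcal{F})}$. This is the mirror image of your Steps~1--2, and the growth in $n$ you found in Step~3 is an artifact of the reversal. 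Note also that the lemma must be read, as in the paper's proof and its only application, with $\tau_t(\mathcal{F}),\tau_t(\mathcal{G})\ge t+1$; otherwise $([T\rangle_k,[T\rangle_\ell)$ is a counterexample. So your case $\tau_t(\mathcal{G})=t$ need not be treated at all.

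There is a second, quantitative shortfall even after the roles are corrected. Stopping after two levels with a prefactor like ${\ell\brack t}\theta_{\ell+t}^2$ is too weak for the stated range of $n$, which is linear in $k,\ell$. The same holds for a prefactor ${\tau_t(\mathcal{F})\brack t}$ with $\tau_t(\mathcal{F})$ as large as $\ell$, which your argument cannot exclude. For instance, take $t=1$, $k=\ell=m$, $n=4m+4$ (the threshold) and $\tau_1(\mathcal{F})=m$. The product of the one-level bound $\theta_m^2{n-2\brack m-2}$ for $\mathcal{F}$ and the two-level bound $\theta_2\theta_m^2{n-3\brack m-3}$ for $\mathcal{G}$, divided by $a(m,1)^2$, grows like $q^{m}$.

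The paper avoids this by branching in $\mathcal{G}$ to full depth $\tau_t(\mathcal{F})-t$. The loss ${\tau_t(\mathcal{F})\brack t}$ in $|\mathcal{F}|$ is then paired with the gain $\theta_k^{\tau_t(\mathcal{F})-t-2}{n-\tau_t(\mathcal{F})\brack \ell-\tau_t(\mathcal{F})}$ in $|\mathcal{G}|$. By \cite[Lemma~2.3(ii)]{Cao-Lu-Lv-Wang-2023}, this product is largest at $\tau=t+2$, which is where the constant ${t+2\brack 2}$ comes from. Lemma~\ref{q<q} then gives the comparison with $a(k,t)a(\ell,t)$ in a couple of lines, and the same pairing handles the case where both covering numbers are at least $t+2$.
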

\proof Applying \cite[Lemma~2.5]{Cao-Lu-Lv-Wang-2023} to $\mathcal{F}$ and $\mathcal{G}$, respectively, we have
\begin{align}\label{s-upper-F1}
	|\mathcal{F}|\leq\left\{
	\begin{array}{ll}
		\theta_{\ell-t+1}{\tau_t(\mathcal{F})\brack t}{n-t-1\brack k-t-1}, &\text{if}~\tau_t(\mathcal{G})=t+1,\vspace{0.1cm}\\
\theta_{\ell}^{\tau_t(\mathcal{G})-t-2}\theta_{\ell-t+1}^2 {\tau_t(\mathcal{F})\brack t}{n-\tau_t(\mathcal{G})\brack k-\tau_t(\mathcal{G})},&\text{if}~\tau_t(\mathcal{G})\geq t+2,
	\end{array}
	\right.
\end{align}
and
\begin{align}\label{s-upper-F2}
	|\mathcal{G}|\leq\left\{
	\begin{array}{ll}
		\theta_{k-t+1}{\tau_t(\mathcal{G})\brack t}{n-t-1\brack \ell-t-1}, &\text{if}~ \tau_t(\mathcal{F})=t+1, \vspace{0.1cm}\\
\theta_{k}^{\tau_t(\mathcal{F})-t-2}\theta_{k-t+1}^2
       {\tau_t(\mathcal{G})\brack t}{n-\tau_t(\mathcal{F})\brack\ell-\tau_t(\mathcal{F})},
       &\text{if}~\tau_t(\mathcal{F})\geq t+2.
	\end{array}
	\right.
\end{align}

Suppose $\tau_t(\mathcal{G})\geq t+2$  and $\tau_t(\mathcal{F})\geq t+2$. From (\ref{s-upper-F1}), (\ref{s-upper-F2}), \cite[Lemma~2.3~(ii)]{Cao-Lu-Lv-Wang-2023} and Lemma~\ref{q<q}, we have
\begin{align*}
|\mathcal{F}||\mathcal{G}|& \leq \theta_{\ell-t+1}^2\theta_{k-t+1}^2{t+2\brack 2}^2 {n-t-2\brack k-t-2} {n-t-2\brack\ell-t-2}\\
&=\theta_{\ell-t+1}^2\theta_{k-t+1}^2{t+2\brack 2}^2\cdot
\frac{(q^{k-t-1}-1)(q^{\ell-t-1}-1)} {(q^{n-k}-1)(q^{n-\ell}-1)}\cdot{n-t-2\brack k-t-1} {n-t-2\brack\ell-t-1} \\
&<q^{-2n+4k+4\ell-2t+6}\cdot{n-t-2\brack k-t-1} {n-t-2\brack\ell-t-1}
<a(k,t)a(\ell,t).
\end{align*}

Suppose $\tau_t(\mathcal{G})\geq t+2$ and $\tau_t(\mathcal{F})=t+1$. From (\ref{s-upper-F1}), (\ref{s-upper-F2}), \cite[Lemma~2.3~(ii)]{Cao-Lu-Lv-Wang-2023} and Lemma~\ref{q<q}, we have
\begin{align*}
|\mathcal{F}||\mathcal{G}|&\leq \theta_{t+1}\theta_{k-t+1}\theta_{\ell-t+1}^2 {t+2\brack 2}{n-t-2\brack k-t-2}{n-t-1\brack \ell-t-1}\\
&<q^{-n+3k+3\ell-2t+5} {n-t-2\brack k-t-1}{n-t-2\brack \ell-t-1}\\
&<q^{k+\ell-2t-2}\theta_{t+2}^2\cdot {n-t-2\brack k-t-1}{n-t-2\brack \ell-t-1}
<a(k,t)a(\ell,t).
\end{align*}
By symmetry, if $\tau_t(\mathcal{F})\geq t+2$ and $\tau_t(\mathcal{G})=t+1$, then $|\mathcal{F}||\mathcal{G}|<a(k,t)a(\ell,t)$ holds, and so this lemma follows.$\qed$

\textbf{\emph{Proof of Theorem \ref{main-1}.}}
Recall that the product of sizes of the families in Theorem \ref{main-1} (i), (ii) and (iii) are $a(k_1,t)a(k_2,t)$, $h(k_1,k_2,t)h(k_2,k_1,t)$ and  $c_1(k_1,t)c_2(k_2,k_1,t)$, respectively.
Let $\mathcal{F}_1\subseteq {V\brack k_1}$ and $\mathcal{F}_2\subseteq {V\brack k_2}$ be cross $t$-intersecting families with $\tau_t(\mathcal{F}_1) \geq t+1$ and $\tau_t(\mathcal{F}_2) \geq t+1$ such that $|\mathcal{F}_1||\mathcal{F}_2|$ is maximized.
By Lemma~\ref{cfcg-t+2}, if $(\tau_t(\mathcal{F}_1),\tau_t(\mathcal{F}_2))\neq (t+1,t+1)$, then
$|\mathcal{F}_1||\mathcal{F}_2|<a(k_1,t)a(k_2,t),$ contradicting the maximality  of $|\mathcal{F}_1||\mathcal{F}_2|$.
Hence $\tau_t(\mathcal{F}_1)=\tau_t(\mathcal{F}_2)=t+1$.
Let $\mathcal{T}_1$ and $\mathcal{T}_2$ be the collections of all $(t+1)$-dimensional $t$-covers of $\mathcal{F}_1$ and $\mathcal{F}_2$, respectively.

\textbf{Case~1.} Both $\mathcal{T}_1$ and $\mathcal{T}_2$ are $t$-intersecting.

Then $\mathcal{T}_1\cup \mathcal{T}_2$ is $t$-intersecting by Fact \ref{s-tt}, and $\tau_t(\mathcal{T}_1\cup\mathcal{T}_2)\in \{t, t+1\}$.
Suppose $\tau_t(\mathcal{T}_1\cup\mathcal{T}_2)=t$.
By Lemma~\ref{m}~(i), we obtain that $|\mathcal{T}_1|\leq \theta_{k_1-t+1}$ and $|\mathcal{T}_2|\leq \theta_{k_2-t+1}$.
If $(|\mathcal{T}_1|,|\mathcal{T}_2|)\neq\left(\theta_{k_1-t+1}, \theta_{k_2-t+1}\right)$, then $|\mathcal{F}_1||\mathcal{F}_2|<h(k_1,k_2,t)h(k_2,k_1,t)$ from Lemmas~\ref{not-contain}~and~\ref{<hh} (i), a contradiction.
Hence, $(|\mathcal{T}_1|,|\mathcal{T}_2|)=\left(\theta_{k_1-t+1}, \theta_{k_2-t+1}\right)$ and $\dim\left(\left(\sum_{A\in \mathcal{T}_1}A\right)\cap \left(\sum_{B\in \mathcal{T}_2}B\right)\right)\geq t+1$. When $\dim\left(\left(\sum_{A\in \mathcal{T}_1}A\right)\cap \left(\sum_{B\in \mathcal{T}_2}B\right)\right)=t+1$, we have $|\mathcal{F}_1||\mathcal{F}_2|<h(k_1,k_2,t)h(k_2,k_1,t)$ from Lemma~\ref{m}~(iib), a contradiction.
Then $\dim\left(\left(\sum_{A\in \mathcal{T}_1}A\right)\cap \left(\sum_{B\in \mathcal{T}_2}B\right)\right)\geq t+2$. Moreover, it follows from Lemma~\ref{m}~(iia) that
$$(\mathcal{F}_1,\mathcal{F}_2)= (\mathcal{H}(k_1,t;T,X,Y),\mathcal{H}(k_2,t;T,Y,X))$$
for some $X\in {V\brack k_1+1}$, $Y\in {V\brack k_2+1}$ and $T\in {V\brack t}$ such that $\dim(X\cap Y)\geq t+2$ and $T\subseteq X\cap Y$.

Suppose $\tau_t(\mathcal{T}_1\cup\mathcal{T}_2)=t+1$.
By Lemma~\ref{equivalent}, we know that $\max\{|\mathcal{T}_1|,|\mathcal{T}_2|\}\leq \theta_{t+2}$.
If $(|\mathcal{T}_1|,|\mathcal{T}_2|)\neq \left(\theta_{t+2},\theta_{t+2}\right)$, then Lemmas~\ref{not-contain}~and~\ref{<aa}~(i) imply that $|\mathcal{F}_1||\mathcal{F}_2|<a(k_1,t)a(k_2,t)$, a contradiction.
Hence, $|\mathcal{T}_1|=|\mathcal{T}_2|=\theta_{t+2}$. By Lemma~\ref{equivalent}, we have
$$(\mathcal{F}_1,\mathcal{F}_2)=\left(\mathcal{A}(k_1,t;Z), \mathcal{A}(k_2,t;Z)\right)$$
for some $Z\in {V\brack t+2}$.

\textbf{Case~2.} For $(i,j)\in \{(1,2),(2,1)\}$, $\mathcal{T}_i$ is not $t$-intersecting, and $|\mathcal{T}_j|=1$.

Suppose $\mathcal{T}_j\nsubseteq \mathcal{T}_i$. By Lemma~\ref{T=1}~(i), we have $|\mathcal{T}_i|\leq \theta_{2}\theta_{k_i-t+1}$. It follows from Lemmas~\ref{not-contain}, \ref{<hh}~(ii) and \ref{<cc}~(ii) that
\begin{align*}
	|\mathcal{F}_1||\mathcal{F}_2|
<\left\{
	\begin{array}{ll}
        h(k_1,k_2,t)h(k_2,k_1,t),&\text{if}~t=1, \vspace{0.1cm}\\
        c_1(k_i,t)c_2(k_j,k_i,t),&\text{if}~t\geq 2.
	\end{array}
	\right.
\end{align*}
This yields a contradiction.
Hence, $\mathcal{T}_j\subseteq \mathcal{T}_i$. Consequently, Lemma~\ref{T=1}~(ii) implies that $|\mathcal{T}_i|\leq q\theta_{t+1}\theta_{k_i-t}+1$. If $|\mathcal{T}_i|\leq q\theta_{t+1}\theta_{k_i-t}$, then Lemmas~\ref{not-contain} and \ref{<cc}~(i) give  $|\mathcal{F}_1||\mathcal{F}_2|<c_1(k_i,t)c_2(k_j,k_i,t)$, a contradiction.
Therefore, $|\mathcal{T}_i|=q\theta_{t+1}\theta_{k_i-t}+1$, and Lemma~\ref{T=1}~(ii) gives $$(\mathcal{F}_i,\mathcal{F}_j)= \left(\mathcal{C}_1(k_i,t;M,L), ~\mathcal{C}_2(k_j,t;M,L)\right)$$
for some $M\in{V\brack k_i+1}$ and $L\in{M\brack t+1}$.
If $(i,j)=(2,1)$, then $$|\mathcal{F}_1||\mathcal{F}_2|=c_1(k_2,t)c_2(k_1,k_2,t) <c_1(k_1,t)c_2(k_2,k_1,t)$$ from Lemma~\ref{<cc}~(iii), a contradiction. Hence, $(i,j)=(1,2)$ and so $$(\mathcal{F}_1,\mathcal{F}_2)= \left(\mathcal{C}_1(k_1,t;M,L), ~\mathcal{C}_2(k_2,t;M,L)\right).$$

\textbf{Case~3.} For $(i,j)\in\{(1,2),(2,1)\}$, $\mathcal{T}_i$ is not $t$-intersecting, and $\mathcal{T}_j$ is $t$-intersecting with $|\mathcal{T}_j|\geq 2$.

By Lemma \ref{T=2-C}, we have $|\mathcal{T}_j|=q+1$ and $|\mathcal{T}_i|\leq \max\left\{\theta_{k_i-t+1}+\theta_{t+2}-\theta_{2}, \theta_{k_i-t+2}\right\}$. Applying Lemma~\ref{not-contain}, if $|\mathcal{T}_i|\leq \theta_{k_i-t+2}$, then $|\mathcal{F}_1||\mathcal{F}_2|<h(k_1,k_2,t)h(k_2,k_1,t)$ from Lemma~\ref{<hh}~(iii);
if $|\mathcal{T}_i|\leq \theta_{k_i-t+1} +\theta_{t+2} -\theta_{2}$, then combining Lemmas~\ref{<hh}~(iv) and \ref{<aa}~(iv), we obtain
\begin{align*}
	|\mathcal{F}_1||\mathcal{F}_2|
<\left\{
	\begin{array}{ll}
        h(k_1,k_2,t)h(k_2,k_1,t),&\text{if}~k_i\geq 2t+1, \vspace{0.1cm}\\
        a(k_1,t)a(k_2,t),&\text{if}~t+2\leq k_i\leq 2t.
	\end{array}
	\right.
\end{align*}
Both estimates yield contradictions.

\textbf{Case~4.} Neither $\mathcal{T}_1$ nor $\mathcal{T}_2$ is $t$-intersecting.

By Lemma \ref{both-non-inter}, we have $|\mathcal{T}_1|=2$ and $|\mathcal{T}_2|\leq (q+1)^2$, $|\mathcal{T}_1|\leq (q+1)^2$ and $|\mathcal{T}_2|=2$, or $|\mathcal{T}_1|\leq 2q+1$ and $|\mathcal{T}_2|\leq 2q+1$.
It follows from Lemmas \ref{not-contain}, \ref{<aa}~(ii) and \ref{<aa}~(iii) that $|\mathcal{F}_1||\mathcal{F}_2|<a(k_1,t)a(k_2,t)$,
a contradiction.

This completes the proof of Theorem \ref{main-1}.$\qed$

\section{Some inequalities}\label{3}

In this section, we prove some inequalities used in the proof of Theorem \ref{main-1}. We always assume that $n$, $k$, $\ell$ and $t$ are positive integers with $\min\{k,\ell\}\geq t+2$ and $n\geq 2\cdot\max\{k,\ell\}+k+\ell-t+5$.

Let $g(m,x,y,t)$, $a(x,t)$, $h(x,y,t)$, $c_1(x,y,t)$ and $c_2(y,t)$ be as in \eqref{equ-3}--\eqref{equ-12}. Set
\begin{align}
\widetilde{g}(m,x,y,t)&=g(m,x,y,t) {n-t-1\brack x-t-1}^{-1},\label{equ-6}\\
\widetilde{a}(x,t)&=a(x,t){n-t-1\brack x-t-1}^{-1},\label{equ-4}\\
\widetilde{h}(x,y,t)&=h(x,y,t){n-t-1\brack x-t-1}^{-1},\label{equ-5}\\
\widetilde{c}_1(x,y,t)&=c_1(x,y,t){n-t-1\brack x-t-1}^{-1},\label{equ-13}\\
\widetilde{c}_2(y,t)&=c_2(y,t){n-t-1\brack y-t-1}^{-1}.\label{equ-14}
\end{align}
Note that
\begin{align}\label{equ-6_m+}
m<\widetilde{g}(m,x,y,t)=m+\theta_{t+1}\theta_{y-t+1}^2\cdot \frac{q^{x-t-1}-1}{q^{n-t-1}-1}<m+q^{-n+(x+2y-t+3)}.
\end{align}

Let $x$ and $b$ be positive integers with $x\geq t+b$. For $j\geq 0$, we have
\begin{align}\label{bbp}
q^{(x-t-b)j}{n-t-b-j\brack x-t-b}{n-t-b\brack x-t-b}^{-1}
=\prod_{i=0}^{x-t-b-1}\frac{q^{n-t-b-i}-q^j}{q^{n-t-b-i}-1}\leq 1.
\end{align}

\begin{lem}\label{<hh}
The following statements hold.
 \begin{enumerate}[{\rm(i)}]
   \item $g(\theta_{\ell-t+1}-1,k,\ell,t) g(\theta_{k-t+1},\ell,k,t)<h(k,\ell,t)h(\ell,k,t).$
   \item $g(\theta_{2}\theta_{\ell},k,\ell,1) g(1,\ell,k,1) <h(k,\ell,1)h(\ell,k,1).$
   \item $g(\theta_{\ell-t+2},k,\ell,t) g\left(q+1,\ell,k,t\right) <h(k,\ell,t)h(\ell,k,t).$
   \item If $\ell\geq 2t+1$, then $g(\theta_{\ell-t+1}+\theta_{t+2} -\theta_{2},k,\ell,t)g(q+1,\ell,k,t) <h(k,\ell,t)h(\ell,k,t).$
 \end{enumerate}
\end{lem}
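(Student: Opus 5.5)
All four inequalities are proved the same way. We normalize both sides by ${n-t-1\brack k-t-1}{n-t-1\brack \ell-t-1}$ and compare $\widetilde{g}$ with $\widetilde{h}$. By \eqref{equ-6_m+}, $\widetilde g(m,k,\ell,t)<m+q^{-n+k+2\ell-t+3}$ and $\widetilde g(m,\ell,k,t)<m+q^{-n+\ell+2k-t+3}$. Since $n\ge 2\max\{k,\ell\}+k+\ell-t+5$, each error term is at most $q^{-\max\{k,\ell\}-2}$, which is tiny compared with the integer gaps we will exploit.

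The first real step is a sharp lower bound for $\widetilde h(x,y,t)$. We write
\[
{n-t\brack x-t}=q^{x-t}{n-t-1\brack x-t}+{n-t-1\brack x-t-1}.
\]
To control ${n-t\brack x-t}-q^{(x-t)(y-t+1)}{n-y-1\brack x-t}$, we count $x$-subspaces through $T$ meeting a fixed $(y+1)$-space $Y\supseteq T$ in dimension at least $t+1$. Counting via the $\theta_{y-t+1}$ choices of a $(t+1)$-space $A$ with $T\subseteq A\subseteq Y$ and applying inclusion--exclusion (Bonferroni) gives
\[
\theta_{y-t+1}{n-t-1\brack x-t-1}-\binom{\theta_{y-t+1}}{2}{n-t-2\brack x-t-2}\le h(x,y,t)-q^{x-t+1}\theta_t .
\]
Dividing by ${n-t-1\brack x-t-1}$ and using Lemma~\ref{q<q}(ii), the second term is at most $\theta_{y-t+1}^2 q^{-(n-x)}$, again negligible. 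Hence
\[
\widetilde h(x,y,t)>\theta_{y-t+1}-\epsilon,\qquad \epsilon\le q^{-\max\{k,\ell\}-2},
\]
and the term $q^{x-t+1}\theta_t/{n-t-1\brack x-t-1}$ is also negligible. Conversely, $\widetilde h(x,y,t)<\theta_{y-t+1}+\epsilon$. Therefore the target $\widetilde h(k,\ell,t)\widetilde h(\ell,k,t)$ is essentially $\theta_{\ell-t+1}\theta_{k-t+1}$, with a deficit of order $q^{-(n-\max)}\cdot\mathrm{poly}$.

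With this in hand, each item reduces to comparing leading integers.

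For (i), the left side is below $(\theta_{\ell-t+1}-1+\epsilon)(\theta_{k-t+1}+\epsilon)$. This falls short of $(\theta_{\ell-t+1}-\epsilon)(\theta_{k-t+1}-\epsilon)$ by roughly $\theta_{k-t+1}$, which dominates the $O(\epsilon\,\theta)$ errors.

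For (ii), with $t=1$ the left side is about $\theta_2\theta_\ell$ against $\theta_\ell\theta_k$. Since $k\ge 3$, we have $\theta_k\ge\theta_3>\theta_2+1$, so the gap is at least $\theta_\ell$.

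For (iii), the left side is about $(q+1)\theta_{\ell-t+2}$ against $\theta_{\ell-t+1}\theta_{k-t+1}$. Since $k-t+1\ge 3$, we get $\theta_{\ell-t+1}\theta_{k-t+1}\ge\theta_{\ell-t+1}(q^2+q+1)$, while $(q+1)\theta_{\ell-t+2}<(q+1)(q\theta_{\ell-t+1}+1)$. The difference is about $\theta_{\ell-t+1}-q-1>0$.

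For (iv), the left side is about $(q+1)(\theta_{\ell-t+1}+\theta_{t+2}-\theta_2)$. Here $\ell\ge 2t+1$ gives $\theta_{t+2}\le\theta_{\ell-t+1}$, so the left side is at most $(q+1)(2\theta_{\ell-t+1}-q-1)$, against at least $(q^2+q+1)\theta_{\ell-t+1}$. The comparison holds since $q^2+q+1>2(q+1)$ fails only at $q=2$, where $7<6$ is false. So for $q=2$ we instead use the exact bound $\theta_{t+2}-\theta_2\le\theta_{\ell-t+1}-\theta_2$, which gives $(q+1)(2\theta_{\ell-t+1}-3)=6\theta_{\ell-t+1}-9<7\theta_{\ell-t+1}$. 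This is fine.

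The main obstacle is the lower bound on $\widetilde h$: getting the inclusion--exclusion count right, and verifying that all error terms, including the $q^{(x-t)(y-t+1)}$ factor handled via \eqref{bbp}, are uniformly absorbed under $n\ge 2\max\{k,\ell\}+k+\ell-t+5$. In particular, this must leave a positive margin in the tightest case, (i), where the integer gap is only one copy of $\theta_{k-t+1}$.
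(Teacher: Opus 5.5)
Your proof is correct, but it is organized differently from the paper's. Both arguments rest on the same fact: $\widetilde h(x,y,t)$ differs from $\theta_{y-t+1}$ by a negligible amount.

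\textbf{The key estimate on $\widetilde h$.} The paper gets the lower bound from the exact expansion $h(x,y,t)=\sum_{i=1}^{y-t+1}q^{(i-1)(x-t)}{n-t-i\brack x-t-1}+q^{x-t+1}\theta_t$ and bounds the deficit $u(x,y,t)$ term by term using \eqref{bbp}. You instead read ${n-t\brack x-t}-q^{(x-t)(y-t+1)}{n-y-1\brack x-t}$ as the number of $x$-spaces through $T$ meeting $Y$ in dimension at least $t+1$. Bonferroni over the $\theta_{y-t+1}$ choices of $A$ then gives a deficit of the same order $\theta_{y-t+1}^2q^{x-n}$ with less computation. In your route the factor $q^{(x-t)(y-t+1)}$ is absorbed by the counting interpretation, so \eqref{bbp} is not needed.

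\textbf{Handling the four items.} The larger difference is downstream. The paper proves (i), then derives (iii) from (i) by rewriting $\theta_{\ell-t+2}=\theta_{\ell-t+1}+q^{\ell-t+1}$ and $\theta_{k-t+1}=q^2\theta_{k-t-1}+q+1$ and checking the sign of $Q$, treating $k=\ell=t+2$ separately. It then gets (iv) from (iii) by monotonicity of $g$ in $m$. You compare leading integers in each item directly. The gaps are $\theta_{k-t+1}$, $(\theta_k-\theta_2)\theta_\ell$, $\theta_{\ell-t+1}-q-1\ge q^2$, and $(q^2-q-1)\theta_{\ell-t+1}+(q+1)^2$. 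Each comfortably exceeds the accumulated error terms, which stay bounded by a small constant under $n\ge 2\max\{k,\ell\}+k+\ell-t+5$. Your approach is uniform and avoids the $Q$ computation and the case split, at the cost of checking each gap separately.

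\textbf{Small slips to clean up.}
In (iii), $(q+1)\theta_{\ell-t+2}=(q+1)(q\theta_{\ell-t+1}+1)$ is an equality, not a strict inequality.

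In (iv), your remark about $q=2$ is garbled. The inequality $q^2+q+1>2(q+1)$ holds for every $q\ge 2$ (it reads $7>6$ at $q=2$), so no special case is needed.

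Your closing remark names (i) as the tightest case, but the smallest margin actually occurs in (iii) at $k=\ell=t+2$, where the gap is $q^2$. The errors there are at most about $q^{-t}$, so the argument is unaffected.
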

\proof
Let $x$ and $y$ be positive integers with $\min\{x,y\}\geq t+2$.
By using Lemma \ref{q<q} repeatedly, we have
$$h(x,y,t)=\sum_{i=1}^{y-t+1} q^{(i-1)(x-t)}{n-t-i\brack x-t-1}+q^{x-t+1}\theta_{t},$$
and together with \eqref{bbp}, we obtain
\begin{align}\label{h<qy}
\widetilde{h}(x,y,t)<\sum_{i=1}^{y-t+1}q^{(i-1)(x-t)} {n-t-i\brack x-t-1}{n-t-1\brack x-t-1}^{-1}+1\leq \sum_{i=1}^{y-t+1}q^{i-1}+1\leq q^{y-t+1}.
\end{align}
Set
\begin{align*}
v(x,y,t)&=\left(\theta_{t+1}\theta_{y-t+1}^2 {n-t-2\brack x-t-2} -q^{x-t+1}\theta_{t}\right) \cdot {n-t-1\brack x-t-1}^{-1},\\
u(x,y,t)&=\left(\theta_{y-t+1}{n-t-1\brack x-t-1}-\sum_{i=1}^{y-t+1} q^{(i-1)(x-t)}{n-t-i\brack x-t-1}\right)\cdot {n-t-1\brack x-t-1}^{-1}.
\end{align*}
Then we have $\widetilde{g}(\theta_{y-t+1},x,y,t) -\widetilde{h}(x,y,t)=u(x,y,t)+v(x,y,t)$.

By Lemma~\ref{q<q} and \eqref{bbp}, we obtain
\begin{align}\label{u<q^n-2k-2l}
u(x,y,t)&=\left(\sum_{i=2}^{y-t+1}q^{i-1}\left(\sum_{j=0}^{i-2} q^{j(x-t-1)}\frac{{n-t-2-j\brack x-t-2}}{{n-t-2\brack x-t-2}}\right)\right)\cdot \frac{q^{x-t-1}-1}{q^{n-t-1}-1}\notag \\
&<\left(\sum_{i=2}^{y-t+1}q^{i-1}\left(\sum_{j=0}^{i-2} q^{j}\right)\right)\cdot q^{x-n}
<q^{-n+x+2y-2t+1}.
\end{align}
Moreover, Lemma~\ref{q<q} gives $v(x,y,t)<q^{-n+x+2y-t+3}$, and so
\begin{align}\label{v<x+2y}
\widetilde{g}(\theta_{y-t+1},x,y,t)<\widetilde{h}(x,y,t) +q^{-n+x+2y-t+4}.
\end{align}

(i) From \eqref{equ-6} and \eqref{equ-5}, it suffices to show that
$$ \widetilde{g}(\theta_{\ell-t+1}-1,k,\ell,t) \widetilde{g}(\theta_{k-t+1},\ell,k,t) <\widetilde{h}(k,\ell,t) \widetilde{h}(\ell,k,t).$$
It follows from \eqref{equ-6_m+}, \eqref{h<qy} and \eqref{v<x+2y} that
\begin{align*}
&~~~~\widetilde{h}(k,\ell,t) \widetilde{h}(\ell,k,t)- \widetilde{g}(\theta_{\ell-t+1}-1,k,\ell,t) \widetilde{g}(\theta_{k-t+1},\ell,k,t)\\
&=\widetilde{h}(k,\ell,t) \widetilde{h}(\ell,k,t)- \widetilde{g}(\theta_{\ell-t+1},k,\ell,t) \widetilde{g}(\theta_{k-t+1},\ell,k,t) +\widetilde{g}(\theta_{k-t+1},\ell,k,t)\\
&> -q^{-n+2k+\ell-t+4}\widetilde{h}(k,\ell,t) -q^{-n+k+2\ell-t+4}\widetilde{h}(\ell,k,t) -q^{-2n+3k+2\ell-2t+8}+\widetilde{g}(\theta_{k-t+1},\ell,k,t) \\
&>-2q^{-n+2k+2\ell-2t+5}-q^{-2n+3k+3\ell-2t+8} +\theta_{k-t+1}>0.
\end{align*}
and so (i) holds.

(ii) From \eqref{u<q^n-2k-2l} and \eqref{h<qy}, it follows that
\begin{align*}
\widetilde{h}(k,\ell,1)\widetilde{h}(\ell,k,1)
&= \left(\widetilde{h}(k,\ell,1) -\theta_{\ell}\right)\widetilde{h}(\ell,k,1) +\left(\widetilde{h}(\ell,k,1)-\theta_{k}\right)\theta_{\ell} +\theta_k\theta_\ell\\
&>-u(k,\ell,1)\widetilde{h}(\ell,k,1) -u(\ell,k,1)\theta_{\ell}+\theta_k\theta_\ell\\
&>-2q^{-n+2k+2\ell-1}+\theta_k\theta_\ell \geq \theta_k\theta_\ell-1.
\end{align*}
By Lemma~\ref{q<q}, we have
$$\widetilde{g}(\theta_{2}\theta_{\ell},k,\ell,1) \widetilde{g}(1,\ell,k,1)<\left(\theta_{2}\theta_{\ell}+q^{-n+k+2\ell+2}\right) \left(1+q^{-n+2k+\ell+2}\right)<\theta_{2}\theta_{\ell}+2.$$
Hence, (ii) follows from \eqref{equ-6} and \eqref{equ-5}.

(iii) It is clear that
\begin{align}
\theta_{\ell-t+2}&=\theta_{\ell-t+1}+q^{\ell-t+1} =q\theta_{\ell-t}+q^{\ell-t+1}+1, \label{l+2=l+1+q}\\
\theta_{k-t+1}&=q^2\theta_{k-t-1}+q+1.\label{k+1=k-1+q}
\end{align}
From \eqref{equ-6_m+}, \eqref{l+2=l+1+q} and \eqref{k+1=k-1+q}, it follows  that
\begin{align*}
&~~~~\widetilde{g}(\theta_{\ell-t+2},k,\ell,t) \widetilde{g}(q+1,\ell,k,t)\notag \\
&=\left(\widetilde{g}(\theta_{\ell-t+1}-1,k,\ell,t) +q^{\ell-t+1}+1 \right) \left(\widetilde{g}(\theta_{k-t+1},\ell,k,t) -q^2\theta_{k-t-1} \right)\\
&=\widetilde{g}(\theta_{\ell-t+1}-1,k,\ell,t) \widetilde{g}(\theta_{k-t+1},\ell,k,t)-Q,
\end{align*}
where
$$Q=q^2\theta_{k-t-1}\cdot \widetilde{g}(\theta_{\ell-t+2},k,\ell,t) -\left(q^{\ell-t+1}+1\right) \cdot\widetilde{g}(\theta_{k-t+1},\ell,k,t) +q^2\theta_{k-t-1}(q^{\ell-t+1}+1).$$
Then, by Lemma \ref{<hh}~(i), it suffices to show that
$Q>0$.

Suppose $(k,\ell)\neq (t+2,t+2)$.
By \eqref{equ-6_m+}, we have
\begin{align*}
Q&>q^2\theta_{k-t-1}\cdot \theta_{\ell-t+2} -\left(q^{\ell-t+1}+1\right) \left(\theta_{k-t+1}+q^{-n+2k+\ell-t+3}\right)\\
&>q^2\theta_{k-t-1}\theta_{\ell-t+2}-
\left(q^{\ell-t+1}+1\right)\cdot\theta_{k-t+1}-q^{-t}\\
&=q^3\theta_{k-t-1}\theta_{\ell-t} -(q+1)\left(q^{\ell-t+1}+1\right)-q^{-t},
\end{align*}
where equality holds due to \eqref{l+2=l+1+q} and \eqref{k+1=k-1+q}.
If $k=t+2$, then $\theta_{k-t-1}=1$ and $\ell\geq t+3$ from $(k,\ell)\ne(t+2,t+2)$. Thus
\begin{align*}
q^3\theta_{k-t-1}\theta_{\ell-t} -(q+1)\left(q^{\ell-t+1}+1\right)
\geq q^3-q-1>q.
\end{align*}
If $k\ge t+3$, then $\theta_{k-t-1}\geq \theta_{2}=q+1$.
Hence,
\begin{align*}
q^3\theta_{k-t-1}\theta_{\ell-t} -(q+1)\left(q^{\ell-t+1}+1\right)
\geq
(q+1)\left(q^{\ell-t+1}(q-1)-1\right)>q.
\end{align*}
Thus, $Q>0$.

Suppose $k=\ell=t+2$. Then
\begin{align*}
Q&=q^2\cdot \widetilde{g}(\theta_{4},t+2,t+2,t) -\left(q^{3}+1\right) \cdot\widetilde{g}(\theta_{3},t+2,t+2,t) +q^2(q^{3}+1)\\
&>q^2\theta_{4}-\left(q^{3}+1\right) \left(\theta_{3}+q^{-n+2t+9}\right)+q^2(q^{3}+1)\\
&>q^2\theta_{4}-\left(q^{3}+1\right) \left(\theta_{3}+q^{-1}\right)+q^5+q^2>0.
\end{align*}

Therefore, (iii) holds.

(iv) By $\ell\geq 2t+1$, we have
$\theta_{\ell-t+2}>\theta_{\ell-t+1}+\theta_{t+2}-\theta_{2}.$
Then $g\left(\theta_{\ell-t+1}+\theta_{t+2} -\theta_{2},k,\ell,t\right) <g\left(\theta_{\ell-t+2},k,\ell,t\right)$. Thus (iv) follows from (iii).$\qed$

\begin{lem}\label{<aa}
The following statements hold.
\begin{enumerate}[{\rm(i)}]
  \item $g(\theta_{t+2}-1,k,\ell,t) g(\theta_{t+2},\ell,k,t)<a(k,t)a(\ell,t).$
  \item $g(2,k,\ell,t) g((q+1)^2,\ell,k,t)<a(k,t)a(\ell,t).$
  \item $g(2q+1,k,\ell,t) g(2q+1,\ell,k,t)<a(k,t)a(\ell,t).$
  \item If $\ell\leq 2t$, then $g(\theta_{\ell-t+1}+\theta_{t+2} -\theta_{2},k,\ell,t)g(q+1,\ell,k,t) <a(k,t)a(\ell,t).$
\end{enumerate}
\end{lem}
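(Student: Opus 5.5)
As in Lemma~\ref{<hh}, we divide everything by ${n-t-1\brack k-t-1}{n-t-1\brack \ell-t-1}$ and work with the normalized quantities $\widetilde{g}$ and $\widetilde{a}$. Each statement (i)--(iv) is then equivalent to an inequality of the form $\widetilde{g}(m_1,k,\ell,t)\,\widetilde{g}(m_2,\ell,k,t)<\widetilde{a}(k,t)\widetilde{a}(\ell,t)$.

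\emph{Lower bound for $\widetilde{a}$.} By \eqref{equ-1}, we have
$$\widetilde{a}(x,t)=q^{x-t-1}\theta_{t+2}{n-t-2\brack x-t-1}{n-t-1\brack x-t-1}^{-1}+{n-t-2\brack x-t-2}{n-t-1\brack x-t-1}^{-1}.$$
Lemma~\ref{q<q}(i) gives ${n-t-1\brack x-t-1}=q^{x-t-1}{n-t-2\brack x-t-1}+{n-t-2\brack x-t-2}$, so
$$q^{x-t-1}{n-t-2\brack x-t-1}{n-t-1\brack x-t-1}^{-1}=1-\frac{q^{x-t-1}-1}{q^{n-t-1}-1}>1-q^{-n+x}.$$
Hence
$$\widetilde{a}(x,t)>\theta_{t+2}\bigl(1-q^{-n+x}\bigr)>\theta_{t+2}-q^{-n+x+t+2}.$$

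\emph{Upper bound for $\widetilde{g}$.} By \eqref{equ-6_m+}, we have
$$\widetilde{g}(m,x,y,t)<m+q^{-n+x+2y-t+3}.$$
Since $n\geq 2\max\{k,\ell\}+k+\ell-t+5$, every error exponent that appears is very negative. For instance, $-n+k+2\ell-t+3\leq -\max\{k,\ell\}-2\leq -t-4$, so all error terms are at most about $q^{-t-4}$.

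\emph{The four cases.}
\begin{enumerate}[{\rm(i)}]
\item The left side is less than
$$(\theta_{t+2}-1+\epsilon)(\theta_{t+2}+\epsilon)=\theta_{t+2}^2-\theta_{t+2}+O(\epsilon\,\theta_{t+2}).$$
The right side exceeds $\theta_{t+2}^2-2\theta_{t+2}q^{-n+\max\{k,\ell\}+t+2}$. The deficit $\theta_{t+2}$ on the left dominates all error terms, because $\epsilon\theta_{t+2}<q^{-t-4}q^{t+3}<1$.
\item The left side is less than $(2+\epsilon)((q+1)^2+\epsilon)\approx 2(q+1)^2$. The right side is about $\theta_{t+2}^2\geq\theta_3^2=(q^2+q+1)^2$. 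The claim then follows from $(q^2+q+1)^2>2(q+1)^2+1$, which holds for all $q\geq 2$: at $q=2$ it reads $49>19$.
\item The left side is less than $(2q+1+\epsilon)^2$. Since $\theta_3=q^2+q+1>2q+1$, the gap is at least $q^2-q\geq 2$, which leaves ample room for the error terms.
\item The hypothesis $\ell\leq 2t$ gives $\theta_{\ell-t+1}\leq\theta_{t+1}$. Hence
$$m_1=\theta_{\ell-t+1}+\theta_{t+2}-\theta_2\leq \theta_{t+1}+\theta_{t+2}-q-1<2\theta_{t+2}.$$
With $m_2=q+1$, the left side is roughly at most $(q+1)(\theta_{t+1}+\theta_{t+2}-q-1)$, to be compared with $\theta_{t+2}^2$. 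Using $\theta_{t+2}=q\theta_{t+1}+1$, this reduces to
$$(q+1)(\theta_{t+1}+\theta_{t+2})<\theta_{t+2}^2+(q+1)^2,$$
which follows from $\theta_{t+2}\geq q^2+q+1>2(q+1)$ when $t\geq1$ and $q\geq 2$.
\end{enumerate}

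\emph{Main obstacle.} I expect case~(iv) to be the delicate step, specifically near small parameters such as $t=1$, $\ell=2$. The difficulty is that $\ell\geq t+2$ forces $\ell\geq 3$ there, so the hypothesis $\ell\leq 2t$ fails for $t=1$; the truly smallest admissible case is $t=2$, $\ell=4$. That case must be checked explicitly, verifying that $q^2+q+1$ versus $2(q+1)$ still leaves enough slack once the $O(q^{-t-4})$ errors are absorbed. In all other cases the margin is at least of order $\theta_{t+2}$ or $q^2$, so the bookkeeping of error terms only requires tracking that each exponent is at most $-t-4$.
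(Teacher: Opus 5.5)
Your proposal is correct and follows the paper's proof essentially step by step. It uses the same normalization, the same lower bound $\widetilde{a}(x,t)>\theta_{t+2}-q^{-n+x+t+2}$ via Lemma~\ref{q<q}(i), the same upper bound from \eqref{equ-6_m+}, and the same four case comparisons. The paper merely packages these as $P(m_1,m_2)>0$ with uniform errors $q^{-5}$ and $q^{-2}$, and it obtains (iii) from (i) by monotonicity of $g$ in $m$ rather than comparing directly.

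Your closing worry about (iv) is unnecessary. The hypothesis $\ell\le 2t$ with $\ell\ge t+2$ forces $t\ge 2$, and your bound $\theta_{t+2}>2(q+1)$ holds uniformly. So the gap is at least $\theta_{t+2}(\theta_{t+2}-2q-2)$, and no separate check of $t=2$, $\ell=4$ is needed.
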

\proof For positive integers $m$ and $x$ with $x\geq t+1$, by Lemma \ref{q<q}, we have
\begin{align*}
a(x,t)=\theta_{t+2}{n-t-1\brack x-t-1}-q\theta_{t+1}{n-t-2\brack x-t-2},
\end{align*}
and so
\begin{align}\label{aa>t+2+}
\widetilde{a}(x,t)> \theta_{t+2}-q^{-n+x+t+2}.
\end{align}
From \eqref{equ-6} and \eqref{equ-4}, we need to proof that
$$\widetilde{a}(k,t) \widetilde{a}(\ell,t)> \widetilde{g}(m_1,k,\ell,t) \widetilde{g}(m_2,\ell,k,t),$$
where $(m_1,m_2)$ is, respectively, $\left(\theta_{t+2}-1,~\theta_{t+2}\right)$, $\left(2,~(q+1)^2\right)$, $\left(2q+1,~2q+1\right)$ and \\ $\left(\theta_{\ell-t+1}+\theta_{t+2}-\theta_{2},~q+1\right)$.
Since $n\geq 2\cdot\max\{k,\ell\}+k+\ell-t+5$, it is clear that $\widetilde{a}(k,t) \widetilde{a}(\ell,t)>\left(\theta_{t+2} -q^{-5}\right)^2$.
Combing with \eqref{equ-6_m+}, it suffices to show that
$$P(m_1,m_2):=\left(\theta_{t+2} -q^{-5}\right)^2-\left(m_1+q^{-2}\right) \left(m_2+q^{-2}\right)>0.$$

(i) We have $P(\theta_{t+2}-1,\theta_{t+2}) >\left(1-2q^{-2}-2q^{-5}\right)\cdot\theta_{t+2}>0,$
and (i) holds.

(ii) We have $P(2,(q+1)^2) >q(q+1)\theta_{t+2} -2(q+1)^2 -4q\geq 2q^2(q+1)-4q
>0,$
and (ii) holds.

(iii) Since
$2q+1<\theta_{t+2}-1$, we have
$$\widetilde{g}(2q+1,k,\ell,t)\widetilde{g}(2q+1,\ell,k,t) <\widetilde{g}(\theta_{t+2}-1,k,\ell,t) \widetilde{g}(\theta_{t+2},\ell,k,t).$$
It follows from (i) that (iii) holds.

(iv) Since $\ell\leq 2t$, we have
\begin{align*}
P(\theta_{\ell-t+1}+\theta_{t+2} -\theta_{2},q+1)&\geq P(\theta_{t+1}+\theta_{t+2} -\theta_{2},\theta_{2})\\
&> \theta_{t+2}^2-2q^{-5}\theta_{t+2} -(\theta_{t+1}+\theta_{t+2}+q^{-2})(\theta_{2}+q^{-2})\\
&>\theta_{t+2}^2-2q^{-5}\theta_{t+2} -2\theta_{t+2}(\theta_{2}+q^{-2})\\
&\geq\theta_{t+2}(\theta_{3}-2q^{-5} -2\theta_{2}-2q^{-2})>0,
\end{align*}
and so (iv) holds.$\qed$

\begin{lem}\label{<cc}
The following statements hold.
\begin{enumerate}[\rm(i)]
  \item  $g(q\theta_{t+1}\theta_{\ell-t},k,\ell,t) g(1,\ell,k,t) <c_1(\ell,t)c_2(k,\ell,t).$
  \item If $t\geq 2$, then $g(\theta_{2}\theta_{\ell-t+1},k,\ell,t) g(1,\ell,k,t) <c_1(\ell,t)c_2(k,\ell,t).$
  \item If $\ell\geq k+1$, then $c_1(k,t)c_2(\ell,k,t)<c_1(\ell,t)c_2(k,\ell,t)$.
\end{enumerate}
\end{lem}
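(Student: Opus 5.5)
Throughout, the plan is to normalize exactly as in the proofs of Lemmas~\ref{<hh} and \ref{<aa}. In (i) and (ii) we divide the factor indexed by $k$ by ${n-t-1\brack k-t-1}$ and the factor indexed by $\ell$ by ${n-t-1\brack \ell-t-1}$. This turns each inequality into a comparison of leading constants plus error terms of size $q^{-n+O(k+\ell)}$. The hypothesis $n\geq 2\max\{k,\ell\}+k+\ell-t+5$ makes every such error at most a small negative power of $q$.

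\textbf{Normalized sizes of the two sides.} On the $g$-side, \eqref{equ-6_m+} gives
\[
\widetilde{g}(m,k,\ell,t)<m+q^{-n+k+2\ell-t+3}
\quad\text{and}\quad
\widetilde{g}(1,\ell,k,t)<1+q^{-n+\ell+2k-t+3}.
\]
For $c_1$, we have $c_1(\ell,t){n-t-1\brack \ell-t-1}^{-1}=1+q^{\ell-t}\theta_{t+1}{n-t-1\brack \ell-t-1}^{-1}>1$. For $c_2$, iterating Lemma~\ref{q<q}(i) as in the derivation of \eqref{h<qy} gives
\[
{n-t-1\brack k-t}-q^{(k-t)(\ell-t)}{n-\ell-1\brack k-t}=\sum_{i=1}^{\ell-t}q^{(i-1)(k-t)}{n-t-i\brack k-t-1}.
\]
The $i$-th summand, divided by ${n-t-1\brack k-t-1}$, equals $q^{i-1}$ times a factor of the form \eqref{bbp}, i.e. $q^{i-1}(1-O(q^{-n+k+\ell}))$. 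The counting behind the Remark (via Lemma~\ref{|N'|}: $\theta_{t+1}$ choices of $F\cap L$, then $q\theta_{\ell-t}$ choices of a $(t+1)$-subspace of $M$ meeting $L$ exactly there) shows $\widetilde{c}_2\geq q\theta_{t+1}\theta_{\ell-t}+1-\varepsilon$, where $\varepsilon$ is bounded by a sum like \eqref{u<q^n-2k-2l}, say $\varepsilon<q^{-n+k+2\ell}$. The main computational step, and the one I expect to be fiddliest, is pinning down this lower bound with an explicit error, mimicking the estimate of $u(x,y,t)$.

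\textbf{Part (i).} The right side is at least $(q\theta_{t+1}\theta_{\ell-t}+1-\varepsilon)\cdot 1$. The left side is less than
\[
(q\theta_{t+1}\theta_{\ell-t}+q^{-n+k+2\ell-t+3})(1+q^{-n+\ell+2k-t+3}),
\]
which is $q\theta_{t+1}\theta_{\ell-t}$ plus something smaller than $q^{-n+2k+2\ell+2}\ll 1$. Since the $+1$ of $\widetilde{c}_2$ beats all these errors, (i) follows.

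\textbf{Part (ii).} It reduces to (i) by monotonicity of $g$ in $m$. For $t\geq2$,
\[
q\theta_{t+1}\theta_{\ell-t}\geq q(q^2+q+1)\theta_{\ell-t}>(q+1)(q\theta_{\ell-t}+1)=\theta_2\theta_{\ell-t+1},
\]
because the difference is $q^3\theta_{\ell-t}-q-1>0$.

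\textbf{Part (iii).} Divide both sides by ${n-t-1\brack k-t-1}{n-t-1\brack \ell-t-1}$. The left side is at most $(1+\delta)(q\theta_{t+1}\theta_{k-t}+1+\delta')$, using an upper bound on $\widetilde{c}_2(\ell,k,t)$ from the same expansion, where $\delta,\delta'$ are tiny errors as above. The right side is at least $q\theta_{t+1}\theta_{\ell-t}+1-\varepsilon$. Since $\ell\geq k+1$, we have $\theta_{\ell-t}-\theta_{k-t}\geq q^{k-t}\geq q^2$. So the gap is at least $q^3\theta_{t+1}$, which dominates all error terms, and (iii) follows.
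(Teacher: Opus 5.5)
Your plan follows the paper's: normalize, compare leading constants in (i), deduce (ii) from (i) by monotonicity of $g$ in $m$ (your check $q^3\theta_{\ell-t}-q-1>0$ is the paper's argument), and compare constants in (iii). Part (i), however, does not go through as written.

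\textbf{The error in (i) is not $\ll 1$.} In your upper bound for the left side, the cross term is
\[
q\theta_{t+1}\theta_{\ell-t}\cdot q^{-n+2k+\ell-t+3}=\frac{(1-q^{-t-1})(1-q^{t-\ell})}{(q-1)^{2}}\,q^{-n+2k+2\ell-t+5}.
\]
The hypothesis on $n$ only gives $-n+2k+2\ell-t+5\le k+\ell-2\max\{k,\ell\}\le 0$, with equality when $k=\ell$ and $n=4k-t+5$. At that boundary with $q=2$, this term, and also the exact quantity $q\theta_{t+1}\theta_{\ell-t}\,(\widetilde g(1,\ell,k,t)-1)$, gets arbitrarily close to $1$. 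For example, $q=2$, $t=2$, $k=\ell=10$, $n=43$ gives an exact value of about $0.75$, which exceeds your claimed $q^{-n+2k+2\ell+2}=2^{-1}$. For $t\ge 3$ your bound equals $q^{t-3}\ge 1$ at this boundary. So ``the $+1$ beats all these errors'' is unsupported exactly where it matters.

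The repair is to keep this term exact. It is at most $1-q^{-t-1}$, so the $+1$ leaves a margin of $q^{-t-1}$. Every other error (the deficit of $\widetilde c_2(k,\ell,t)$, the $q^{-n+k+2\ell-t+3}$ from $\widetilde g(m,k,\ell,t)$, and their products) totals less than $q^{-\ell-1}$, because $-n+k+2\ell-t+3\le -\ell-2$. Since $\ell\ge t+2$, the margin wins. The paper's proof passes through the same term. Its bound $q^{-1}+q^{-13}$ for $q^{-n+2k+\ell-t+3}\,\widetilde g(q\theta_{t+1}\theta_{\ell-t},k,\ell,t)$ is only justified for $q\ge 3$; at $q=2$, $t=1$, $k=\ell=3$, $n=16$ this quantity is about $0.56$. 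The margin $q^{-t-1}$ is what really makes (i) hold.

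\textbf{Your displayed expansion is off by one.} Iterating Lemma~\ref{q<q}(i) gives $\sum_{i=1}^{\ell-t}q^{(i-1)(k-t)}{n-t-1-i\brack k-t-1}$, not ${n-t-i\brack k-t-1}$. With your version, the prefactor $q^{k-t}\theta_{t+1}$ would produce the leading constant $q^{k-t}\theta_{t+1}\theta_{\ell-t}$, contradicting the $q\theta_{t+1}\theta_{\ell-t}$ you then take from counting. The correct version is the paper's \eqref{c<ell+1}:
\[
\widetilde c_2(k,\ell,t)=1+\theta_{t+1}\sum_{i=1}^{\ell-t}q^{i}\pi_i,\qquad \pi_i=q^{i(k-t-1)}{n-t-1-i\brack k-t-1}{n-t-1\brack k-t-1}^{-1}\le 1,
\]
and $1-\pi_i<q^{i+k-n}$ by Lemma~\ref{q<q} and \eqref{bbp}. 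This gives $\widetilde c_2(k,\ell,t)>q\theta_{t+1}\theta_{\ell-t}+1-q^{-n+k+2\ell-t+2}$ at once, with no counting or inclusion--exclusion. So the step you expected to be fiddliest is immediate.

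\textbf{Part (iii) is correct and takes a different route from the paper.} The paper compares the two $c_2$-sums term by term ($Q_i>0$, since the $\ell$-products have more factors below $1$). It then uses the extra $(k-t+1)$-st term of $\widetilde c_2(k,\ell,t)$ to absorb $(\widetilde c_1(k,t)-1)\widetilde c_2(\ell,k,t)$ ($R>0$). You instead use the constant gap $q\theta_{t+1}(\theta_{\ell-t}-\theta_{k-t})\ge q^{k-t+1}\theta_{t+1}$. Since $\widetilde c_2(\ell,k,t)\le q\theta_{t+1}\theta_{k-t}+1$ exactly (so your $\delta'=0$) and $\widetilde c_1(k,t)-1<q^{2k+1-n}$, all corrections are exponentially small against this gap. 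Your version is shorter and has ample slack; the paper's needs no quantitative error estimates at all.
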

\proof Let $x$ and $y$ be positive integers with $\min\{x,y\}\geq t+2$. Then, from Lemma~\ref{q<q},
\begin{align}\label{c<ell+1}
c_2(x,y,t)= \theta_{t+1} \sum_{i=1}^{y-t}q^{i(x-t)}{n-t-1-i\brack x-t-1}+{n-t-1\brack x-t-1}.
\end{align}

(i) From $q\theta_{\ell-t}=\sum_{i=1}^{\ell-t}q^i$, we have
\begin{align*}
\sum_{i=1}^{\ell-t}q^{i(k-t)}{n-t-1-i\brack k-t-1} -q\theta_{\ell-t}{n-t-1\brack k-t-1}&=-\sum_{i=1}^{\ell-t}q^i\left(\sum_{j=0}^{i-1} q^{j(k-t-1)}{n-t-2-j\brack k-t-2}\right),
\end{align*}
and together with \eqref{bbp} and \eqref{c<ell+1}, we obtain
\begin{align*}
&~~~~\widetilde{c}_2(k,\ell,t)- \widetilde{g}(q\theta_{t+1}\theta_{\ell-t},k,\ell,t)\\
&=1-\theta_{t+1} \sum_{i=1}^{\ell-t} q^i\left(\sum_{j=0}^{i-1} q^{j(k-t-1)} \frac{{n-t-2-j\brack k-t-2}}{{n-t-1\brack k-t-1}}\right)-\theta_{t+1}\theta_{\ell-t+1}^2\cdot \frac{q^{k-t-1}-1}{q^{n-t-1}-1}\\
&>1-q^{t+1} \sum_{i=1}^{\ell-t} q^i\left(\sum_{j=0}^{i-1} q^{j-(n-k)}\right)-q^{-n+k+2\ell-t+3}\\
&>1-q^{-n+k+2\ell-t+2} -q^{-n+k+2\ell-t+3}.
\end{align*}
Hence,
\begin{align*}
&~~~~\widetilde{c}_1(\ell,t)\widetilde{c}_2(k,\ell,t)- \widetilde{g}(q\theta_{t+1}\theta_{\ell-t},k,\ell,t) \widetilde{g}(1,\ell,k,t)\\
&>\widetilde{c}_2(k,\ell,t) -\widetilde{g}(q\theta_{t+1}\theta_{\ell-t},k,\ell,t) \left(1+\theta_{t+1}\theta_{k-t+1}^2 \cdot\frac{q^{\ell-t-1}-1}{q^{n-t-1}-1}\right)\\
&>1-q^{-n+k+2\ell-t+2} -q^{-n+k+2\ell-t+3}- q^{-n+2 k+\ell-t+3}\cdot \widetilde{g}(q\theta_{t+1}\theta_{\ell-t},k,\ell,t)\\
&>1-2q^{-5}- q^{-1}-q^{-13}>0.
\end{align*}
Then (i) holds.

(ii) By $t\geq 2$, we have
$q\theta_{t+1}\theta_{\ell-t}-\theta_{2}\theta_{\ell-t+1} = q\theta_{\ell-t}\left(\theta_{t+1}-\theta_{2}\right) -\theta_{2}>0.$
Therefore,
\begin{align*}
g(\theta_{2}\theta_{\ell-t+1},k,\ell,t) g(1,\ell,k,t) <g(q\theta_{t+1}\theta_{\ell-t},k,\ell,t) g(1,\ell,k,t),
\end{align*}
and so (ii) follows from (i).

(iii) By \eqref{equ-13} and \eqref{equ-14}, we have $\widetilde{c}_1(\ell,t)>1$, $\widetilde{c}_1(k,t)=1+q^{k-t}\theta_{t+1}{n-t-1\brack k-t-1}^{-1}$, and
\begin{align*}
\widetilde{c}_2(k,\ell,t)\geq \theta_{t+1} \sum_{i=1}^{k-t}
q^{i(k-t)}\frac{{n-t-1-i\brack k-t-1}}{{n-t-1\brack k-t-1}}+\theta_{t+1}q^{(k-t+1)(k-t)}\frac{{n-k-2\brack k-t-1}}{{n-t-1\brack k-t-1}}
\end{align*}
from \eqref{c<ell+1} and $\ell\geq k+1$. Then
\begin{align*}
\widetilde{c}_1(\ell,t)\widetilde{c}_2(k,\ell,t)- \widetilde{c}_1(k,t)\widetilde{c}_2(\ell,k,t) &>\widetilde{c}_2(k,\ell,t)- \widetilde{c}_2(\ell,k,t)\left(1+\frac{q^{k-t}\theta_{t+1}} {{n-t-1\brack k-t-1}}\right)\\
&>\theta_{t+1}\sum_{i=1}^{k-t}Q_i\notag +\frac{q^{k-t}\theta_{t+1}}{{n-t-1\brack k-t-1}}\cdot R,
\end{align*}
where
\begin{align*}
Q_i=q^{i(k-t)}\frac{{n-t-1-i\brack k-t-1}}{{n-t-1\brack k-t-1}}- q^{i(\ell-t)}\frac{{n-t-1-i\brack \ell-t-1}}{{n-t-1\brack \ell-t-1}},~~~~R=q^{(k-t)^2}{n-k-2\brack k-t-1}-\widetilde{c}_2(\ell,k,t).
\end{align*}
To prove (iii), it suffices to show that $R>0$ and $Q_i>0$ for $0\leq i\leq k-t-1$.

By $\ell\geq k+1$, we obtain
\begin{align*}
Q_i=q^{i}\prod_{j=0}^{k-t-2} \frac{q^{n-t-1-j}-q^{i}}{q^{n-t-1-j}-1} \left(1-\prod_{j=k-t-1}^{\ell-t-2} \frac{q^{n-t-1-j}-q^{i}}{q^{n-t-1-j}-1}\right)>0.
\end{align*}
From Lemma~\ref{q<q}, \eqref{c<ell+1} and \eqref{bbp}, it follows that
\begin{align*}
R&>q^{(k-t-1)(n-2k+t-1)+(k-t)^2}-\theta_{t+1} \sum_{i=1}^{k-t}q^{i}-1\\
&> q^{(2k+7)(k-t-1)+(k-t)^2}-q^{k+2}-1>0,
\end{align*}
due to $n\geq k+3\ell-t+5$.

Therefore,  (iii) follows.
$\qed$

\section*{Acknowledgments}
B. Lv is supported by the National Natural Science Foundation of China (12571347 \& 12131011), and Beijing Natural Science Foundation (1252010).  K. Wang is supported by the National Natural Science Foundation of China (12131011 \& 12571347) and Beijing Natural Science Foundation (1252010 \& 1262010).

\end{document}